\theoremstyle{plain}
\newtheorem{theorem}{Theorem}[section]
\newtheorem{claim}[theorem]{Claim}
\newtheorem{corollary}[theorem]{Corollary}
\newtheorem{lemma}[theorem]{Lemma}
\theoremstyle{definition}
\newcommand{\be}{\begin{equation}}
\newcommand{\ee}{\end{equation}}
\newcommand{\vect}[1]{\ensuremath{\mathbf{#1}}}
\newcommand{\mat}[1]{\ensuremath{\mathbf{#1}}}
\newcommand{\grad}{\bigtriangledown}
\newcommand{\twonorm}[1]{\left\| {#1} \right\|_2}
\newcommand{\frob}[1]{\left\|#1\right\|_F}
\newcommand{\order}[1]{\ensuremath{\mathcal{O}\left(#1\right)}}
\newcommand{\Om}[1]{\ensuremath{\Omega\left(#1\right)}}
\newcommand{\eqdef}{\stackrel{\triangle}{=}}
\newcommand{\abs}[1]{\left|#1\right|}
\newcommand{\inv}[1]{{#1}^{-1}}
\newcommand{\trans}[1]{{#1}^{\top}}
\def\nn{\nonumber}
\newcommand\R{\mathbb{R}}
\newcommand\ch{\widehat{c}}
\newcommand\Rnn{\mathbb{R}^{n\times n}}
\newcommand{\singmin}[1]{\ensuremath{\sigma_{\min}\left(#1\right)}}
\newcommand{\singi}[1]{\ensuremath{\sigma_{i}\left(#1\right)}}
\newcommand{\lammin}[1]{\ensuremath{\lambda_{\min}\left(#1\right)}}
\newcommand{\lami}[1]{\ensuremath{\lambda_{i}\left(#1\right)}}
\newcommand{\poly}[1]{\textrm{poly}\left(#1\right)}
\newcommand{\M}{\mat{M}}
\newcommand{\A}{\mat{A}}
\newcommand{\Errt}[1][t]{\mat{\bigtriangleup_{#1}}}
\newcommand\cnM{\kappa}
\newcommand\compfactor{\alpha}
\newcommand\compfactorbeta{\beta}
\newcommand{\cn}[1]{\kappa\left(#1\right)}
\newcommand\X{\mat{X}}
\newcommand\fstar{f_*}
\newcommand\ut[1][t]{{u_{#1}}}
\newcommand\U{\mat{U}}
\newcommand{\Ut}[1][t]{\mat{U_{#1}}}
\newcommand{\Utinv}{\inv{\Ut}}
\newcommand\Utp{\mat{U_{t+1}}}
\newcommand\Utptild{\mat{\widetilde{U}_{t+1}}}
\newcommand{\eye}{\mat{I}}
\renewcommand\v{\vect{v}}
\newcommand\w{\vect{w}}
\newcommand\wTr{\trans{\vect{w}}}
\newcommand\matmult{{\omega}}
\begin{document}

% If your paper is accepted and the title of your paper is very long,
% the style will print as headings an error message. Use the following
% command to supply a shorter title of your paper so that it can be
% used as headings.
%
%\runningtitle{I use this title instead because the last one was very long}

% If your paper is accepted and the number of authors is large, the
% style will print as headings an error message. Use the following
% command to supply a shorter version of the authors names so that
% they can be used as headings (for example, use only the surnames)
%
%\runningauthor{Surname 1, Surname 2, Surname 3, ...., Surname n}

\title{Global Convergence of Non-Convex Gradient Descent for Computing Matrix Squareroot}

\author{
Prateek Jain\footnote{Microsoft Research, India. Email: prajain@microsoft.com} \and
Chi Jin\footnote{University of California, Berkeley. Email: chijin@cs.berkeley.edu} \and 
Sham M. Kakade\footnote{University of Washington. Email: sham@cs.washington.edu} \and
Praneeth Netrapalli\footnote{Microsoft Research, India. Email: praneeth@microsoft.com}
}

\maketitle
% \aistatstitle{}

% \aistatsauthor{  \And Chi Jin \And Sham M. Kakade \And Praneeth Netrapalli }

% \aistatsaddress{ \And UC Berkeley \And University of Washington \And  Microsoft Research India} 

%!TEX root = noncvx_gradDes.tex

\begin{abstract}
While there has been a significant amount of work studying gradient descent techniques for non-convex optimization problems over the last few years, all existing results establish either \emph{local convergence with good rates} or \emph{global convergence with highly suboptimal rates}, for many problems of interest. In this paper, we take the first step in getting the best of both worlds -- establishing global convergence and obtaining a good rate of convergence for the problem of computing squareroot of a positive definite (PD) matrix, which is a widely studied problem in numerical linear algebra with applications in machine learning and statistics among others.

%In this paper, we study the problem of computing square-root of a positive semidefinite (PSD) matrix $\M$, i.e., compute $\U^*$ s.t. $\U^*=\arg\min_{\U}\|\M-\U^2\|_F^2$. We provide the {\em first global convergence guarantees} for a natural algorithm based on iteratively updating $\U$ via gradient descent, from {\em arbitrary starting point} $\Ut[0]$.
Given a PD matrix $\M$ and a PD starting point $\Ut[0]$, we show that gradient descent with appropriately chosen step-size finds an $\epsilon$-accurate squareroot of $\M$ in $\order{\compfactor
\log (\frob{\M-\Ut[0]^2}/\epsilon)}$ iterations, where $\compfactor \eqdef (\max\{\twonorm{\Ut[0]}^2,\twonorm{\M}\}$ $/$ $\min \{\sigma_{\min}^2(\Ut[0]),\singmin{\M}\} )^{3/2}$. Our result is the first to establish global convergence for this problem and that it is robust to errors in each iteration. 
A key contribution of our work is the general proof technique which we believe should further excite research in understanding deterministic and stochastic variants of simple non-convex gradient descent algorithms with \emph{good global convergence rates} for other problems in machine learning and numerical linear algebra.
%Existing provable non-convex results typically only guarantee local convergence and hence require careful initialization. In contrast, our result shows global convergence from an arbitrary initial point, iterating through the space with a large number of saddle points. %More generally, our result demonstrates that non-convex optimization can be a viable approach to obtaining fast, robust algorithms, taking a step towards understanding and validating the efficacy of such methods in various large scale practical applications. In contrast to previous work on related non-convex problems, which provide only local convergence guarantees and hence require careful initialization, our result is one of the first such result which shows global convergence from an arbitrary initial point, thus requiring navigating through the space with a large number of local minimas. Our proof technique is fairly general and we believe it should further excite research in designing deterministic and stochastic variants of simple non-convex gradient descent algorithms for other problems in machine learning and numerical linear algebra.
\end{abstract}
%!TEX root = main.tex

\section{Introduction}\label{sec:intro}
Given that a large number of problems and frameworks in machine learning are non-convex optimization problems (examples include non-negative matrix factorization~\citep{LeeS2001}, sparse coding~\citep{AharonEB2006}, matrix sensing~\citep{recht2010guaranteed}, matrix completion~\citep{KorenBV2009}, phase retrieval~\citep{netrapalli2015phase} etc.), in the last few years, there has been an increased interest in designing efficient non-convex optimization algorithms. Several recent works establish \emph{local convergence} to the global optimum for problems such as matrix sensing~\citep{jain2013low,tu2015low}, matrix completion~\citep{jain2014fast,sun2015guaranteed}, phase retrieval~\citep{candes2015phase}, sparse coding~\citep{agarwal2013learning} and so on (and hence, require careful initialization). However, despite strong empirical evidence, none of these results have been able to establish \emph{global convergence}. On the other hand some other recent works~\citep{nesterov2006cubic,ge2015escaping,lee2016gradient,sun2015nonconvex} establish the global convergence of gradient descent methods to local minima for a large class of non-convex problems but the results they obtain are quite suboptimal compared to the local convergence results mentioned above. In other words, results that have very good rates are only local (and results that are global do not have very good rates).

Therefore, a natural and important question is if gradient descent actually has a \emph{good global convergence rate} when applied to specific and important functions that are of interest in machine learning. Apart from theoretical implications, such a result is also important in practice since a) finding a good initialization might be difficult and b) local convergence results are inherently difficult to extend to stochastic algorithms due to noise.

In this work, we answer the above question in affirmative for the problem of computing square root of a positive definite (PD) matrix $\M$: i.e., $\min_{\U\succeq 0} f(\U)$ where $f(\U)=\|\M-\U^2\|_F^2$. This problem in itself is a fundamental one and arises in several contexts such as computation of the matrix sign function \citep{Higham2008},
%(see \cite{Higham2008}, Chapter 6), 
computation of data whitening matrices, signal processing applications \citep{KaminskiBS1971,Carlson1990,VanDerMerweW2001,TippettABHW2003} and so on. %Additionally, several important numerical linear algebra problems like computation of the matrix sign function, the definite generalized eigenvalue problem etc., also reduce to computing matrix square-root .
%Computation of the matrix squareroot is a fundamental problem in several applications.
\subsection{Related work}
Given the importance of computing the matrix squareroot, there has been a tremendous amount of work in the numerical linear algebra community focused on this problem \citep{BjorckH1983,Higham1986,Higham1987,Higham1997,Meini2004}. For a detailed list of references, see Chapter 6 in Higham's book \citep{Higham2008}. 

The basic component of most these algorithms is the Newton's method to find the square root of a positive number. Given a positive number $m$ and a positive starting point $\ut[0]$, Newton's method gives rise to the iteration
\begin{align}
\ut[t+1] \leftarrow \frac{1}{2}\left(\ut+\frac{m}{\ut}\right).
\label{eqn:newton-scalar}
\end{align}
It can be shown that the iterates converge to $\sqrt{m}$ at a quadratic rate (i.e., $\epsilon$-accuracy in $\log \log \frac{1}{\epsilon}$ iterations). The extension of this approach to the matrix case is not straight forward due to non commutativity of matrix multiplication. For instance, if $\M$ and $\Ut$ were matrices, it is not clear if $\frac{m}{\ut}$ should be replaced by $\Utinv\M$ or $\M\Utinv$ or something else. One approach to overcome this issue is to select $\Ut[0]$ carefully to ensure commutativity through all iterations \citep{Higham1986,Higham1997,Meini2004}, for example, $\Ut[0]=\M$ or $\Ut[0]=\eye$. % which ensures that all the iterates have the same eigenvectors as $\M$ and hence commute with $\M$. 
However, commutativity is a brittle property and small numerical errors in an iteration itself can result in loss of commutativity. Although a lot of work since, has focused on designing stable iterations that are inspired by Eq.\eqref{eqn:newton-scalar} \citep{Higham1986,Higham1997,Meini2004}, and has succeeded in making it robust in practice, no provable robustness guarantees are known in the presence of repeated errors.
Similarly, another recent approach by ~\citet{Sra15} uses geometric optimization to solve the matrix squareroot problem but their analysis also does not address the stability or robustness to numerical or statistical errors (if we see a noisy version of $\M$) .

Another approach to solve the matrix square-root problem is to use the eigenvalue decomposition (EVD) and then take square-root of the eigenvalues. To the best of our knowledge, state-of-the-art computation complexity for computing the EVD of a matrix (in the real arithmetic model of computation) is due to~\citet{PanCZ1998}, which is
%$\order{n^3 + n\log^2 n \log \log \frac{1}{\epsilon}}$ for general matrices and
$\order{n^{\matmult}\log n + n \log^2 n \log \log \frac{1}{\epsilon}}$ for matrices with distinct eigenvalues. Though the result is close to optimal (in reducing the EVD to matrix multiplication), the algorithm and the analysis are quite complicated. For instance robustness of these methods to errors is not well understood. As mentioned above however, our focus is to understand if local search techniques like gradient descent (which are often applied to several non-convex optimization procedures) indeed avoid saddle points and local minima, and can guide the solution to global optimum. 
% Though the algorithm seems to be robust to noise, making this precise in the presence of round off errors for instance 
%Note that there is a drastic difference in the above two bounds depending on whether the matrix has repeated eigenvalues or not (with the bound being cubic in $n$ in the general case). It is not clear if this dichotomy is inherent in the computation of the squareroot as well (our result shows that this is indeed not the case). More importantly, the result is for the real arithmetic model of computation, and it is not clear if the method is robust to round off errors, as are incurred in practice. To the best of our knowledge, the best known bound for computing the eigen decomposition in finite precision models of computation is $\order{n^3\log \cnM}$, which is by power method \cite{GolubVL2012}. Here $\cnM$ denotes the condition number of the input matrix $\M$ (i.e., the ratio of the largest to the smallest eigenvalue of $\M$).

As we mentioned earlier, \citet{ge2015escaping, lee2016gradient} give some recent results on global convergence for general non-convex problems which can be applied to matrix squareroot problem. While \citet{lee2016gradient} prove only asymptotic behavior of gradient descent without any rate, applying the result of~\citet{ge2015escaping} gives us a runtime of $\order{n^{10}/\poly{\epsilon}}$\footnote{For optimization problem of dimension $d$, \citet{ge2015escaping} proves convergence in the number of iteration of $\order{d^4}$, with $\order{d}$ computation per iteration. In matrix squareroot problem $d=n^2$, which gives total $\order{n^{10}}$ dependence.}, which is highly suboptimal in terms of its dependence on $n$ and $\epsilon$.

Finally, we note that subsequent to this work, \citet{jin2017escape} proved global convergence results with almost sharp dimension dependence for a much wider class of functions. While \citet{jin2017escape} explicitly add perturbation to help escape saddle points, our framework does not require perturbation, and shows that for this problem, gradient descent naturally stays away from saddle points.

\subsection{Our contribution}\label{sec:contrib}
\renewcommand{\arraystretch}{1.5}
\begin{table*}[t]
	\centering
	\begin{tabular}{ | >{\centering\arraybackslash} m{5.5cm} | >{\centering\arraybackslash} m{4.9cm} | >{\centering\arraybackslash} m{2cm} | >{\centering\arraybackslash} m{1.8cm} |}
		\hline
		\textbf{Method} & \textbf{Runtime} & Global convergence & Provable robustness\\
		\hline
		\textbf{Gradient descent (this paper)} & $\order{\compfactor n^{\matmult}\log \frac{1}{\epsilon}}$ & $\checkmark$ & $\checkmark$ \\
		\hline
		{Stochastic gradient descent \citep{ge2015escaping}} & $\order{n^{10}/\poly{\epsilon}}$ & $\checkmark$ & $\checkmark$ \\
		\hline
		Newton variants \citep{Higham2008}& $\order{n^{\matmult}\log \log \frac{1}{\epsilon}}$ & $\times$ & $\times$ \\
		\hline
		EVD (algebraic \citep{PanCZ1998})& $\order{n^{\matmult}\log n + n \log^2 n \log \log \frac{1}{\epsilon}}$ & Not iterative & $\times$ \\
		\hline
		EVD (power method \citep{GolubVL2012})& $\order{n^3 \log \frac{1}{\epsilon}}$ & Not iterative & $\times$ \\
		\hline
	\end{tabular}
	\caption{Comparison of our result to existing ones. Here $\omega$ is the matrix multiplication exponent and $\alpha$ is our convergence rate parameter defined in Eq.\eqref{eqn:compfactor}. We show that our method enjoys global convergence and is also provably robust to arbitrary bounded errors in each iteration. In contrast, Newton variants only have local convergence and their robustness to errors in multiple iterations is not known. Robustness of methods based on eigenvalue decomposition is also not well understood.}
	\label{tab:comparison}
\end{table*}
In this paper, we propose doing gradient descent on the following non-convex formulation:
\begin{align}\label{eqn:main}
\min_{\U\in \Rnn; \U \succeq 0} \frob{\M-\U^2}^2.
\end{align}
We show that if the starting point $\Ut[0]$ is chosen to be a positive definite matrix, our algorithm converges to the global optimum of Eq.\eqref{eqn:main} at a geometric rate. In order to state our runtime, we make the following notation:
\begin{align}\label{eqn:compfactor}
	\compfactor \eqdef \left(\frac{\max\left(\twonorm{\Ut[0]},\sqrt{\twonorm{\M}}\right)}{\min \left(\singmin{\Ut[0]},\sqrt{\singmin{\M}}\right)}\right)^3,
\end{align}
where $\singmin{\Ut[0]}$ and $\twonorm{\Ut[0]}$ are the minimum singular value and operator norm respectively of the starting point $\Ut[0]$, and $\singmin{\M}$ and $\twonorm{\M}$ are those of $\M$. Our result says that gradient descent converges $\epsilon$ close to the optimum of Eq.\eqref{eqn:main} in $\order{\alpha \log \frac{\frob{\M - \Ut[0]^2}}{\epsilon}}$ iterations.
Each iteration involves doing only three matrix multiplications and no inversions or leastsquares. So the total runtime of our algorithm is $\order{n^{\matmult} \alpha \log \frac{\frob{\M}}{\epsilon}}$, where $\omega < 2.373$ is the matrix multiplication exponent\citep{williams2012multiplying}. As a byproduct of our global convergence guarantee, we obtain the robustness of our algorithm to numerical errors \emph{for free}. In particular, we show that our algorithm is robust to errors in multiple steps in the sense that if each step has an error of at most $\delta$, then our algorithm achieves a limiting accuracy of $\order{\alpha\sqrt{\twonorm{\M}} \delta}$. Another nice feature of our algorithm is that it is based purely on matrix multiplications, where as most existing methods require matrix inversion or solving a system of linear equations.
An unsatisfactory part of our result however is the dependence on $\alpha \geq \cnM^{3/2}$, where $\cnM$ is the condition number of $\M$. We prove a lower bound of $\Om{\cnM}$ iterations for our method which tells us that the dependence on problem parameters in our result is not a weakness in our analysis.

\textbf{Outline}: In Section~\ref{sec:notation}, we will briefly set up the notation we will use in this paper. In Section~\ref{sec:results}, we will present our algorithm, approach and main results. We will present the proof of our main result in Section~\ref{sec:proofsketch} and conclude in Section~\ref{sec:conc}. The proofs of remaining results can be found in the Appendix.

%%% Local Variables: 
%%% mode: latex
%%% TeX-master: "noncvx_gradDes"
%%% End: 

\section{Notation}\label{sec:notation}
Let us briefly introduce the notation we will use in this paper. We use boldface lower case letters ($\v,\w,\ldots$)
to denote vectors and boldface upper case letters ($\M,\X,\ldots$) to denote matrices. $\M$ denotes the input matrix
we wish to compute the squareroot of. $\singi{\A}$ denotes the $i^{\textrm{th}}$ singular value of $\A$. $\singmin{\A}$ denotes the smallest singular value of $\A$. $\cn{\A}$ denotes
the condition number of $\A$ i.e., $\frac{\twonorm{\A}}{\singmin{\A}}$. $\cnM$ without an argument denotes $\cn{\M}$. $\lambda_i\left(\A\right)$ denotes the $i^{\textrm{th}}$ largest eigenvalue of $\A$ and $\lambda_{\textrm{min}}(\A)$ denotes the smallest eigenvalue of $\A$.

%\section{Preliminaries}
%In this draft, we consider the problem of computing the square root of a given $n \times n$ PSD matrix $\M$ as $\U^2$. We propose to do this by doing gradient descent on the following non convex optimization problem:
%%\begin{align}\label{eqn:optprob}
%%\min_{\U\in \Rnn} \frob{\M-\U^2}^2.
%%\end{align}
%In the next section, we will obtain convergence rate for gradient descent on the above optimization problem. The following lists some of the notation we use in this paper.
%\begin{itemize}
%\item	$\M$ -- input PSD matrix
%\item	$\U^2$ -- candidate factorization
%\item	$\cnM$ -- condition number of $\M$ i.e., $\frac{\twonorm{\M}}{\singmin{\M}}$
%\end{itemize}

%!TEX root = noncvx_gradDes.tex

\section{Our Results}\label{sec:results}
In this section, we present our guarantees and the high-level approach for the analysis of Algorithm~\ref{algo:gradDesMS} which is just 
gradient descent on the non-convex optimization problem:
\begin{align}\label{eqn:optprob}
\min_{\U\in \Rnn;\U \succeq 0} \frob{\M-\U^2}^2.
\end{align}
%where we constrain $U$ to be symmetric.
%will present our algorithm, approach and guarantees.
%\subsection{Algorithm}
%Our algorithm is just  The algorithm is formally presented in Algorithm~\ref{algo:gradDesMS}.
\begin{algorithm}[t]
\caption{Gradient descent for matrix square root}\label{algo:gradDesMS}
\begin{algorithmic}
\renewcommand{\algorithmicrequire}{\textbf{Input: }}
\renewcommand{\algorithmicensure}{\textbf{Output: }}
\REQUIRE $\M$, PD matrix $\Ut[0], \eta, T$
\ENSURE  $\U$
\FOR{$t = 0,\cdots,T-1$}
\STATE $\Utp = \Ut - \eta \left(\Ut^2 - \M\right) \Ut - \eta \Ut\left(\Ut^2 - \M\right)$
\ENDFOR
\STATE \textbf{Return} $\Ut[T]$.
\end{algorithmic}
\end{algorithm}
We first present a warmup analysis, where we assume that all the iterates of Algorithm~\ref{algo:gradDesMS} commute with $\M$. Later, in Section~\ref{sec:approach} we present our approach to analyze Algorithm~\ref{algo:gradDesMS} for any general starting point $\Ut[0]$. We provide formal guarantees in Section~\ref{sec:guarantees}. 
\subsection{Warmup -- Analysis with commutativity}
In this section, we will give a short proof of convergence for Algorithm~\ref{algo:gradDesMS}, when we ensure that all iterates commute with $\M$.
\begin{lemma}\label{lem:conv-comm}
There exists a constant $c$ such that if $\eta < \frac{c}{\twonorm{\M}}$, and $\Ut[0]$ is chosen to be $\sqrt{\twonorm{\M}}\cdot\eye$, then $\Ut$ in Algorithm~\ref{algo:gradDesMS} satisfies:
\begin{align*}
\frob{\Ut^2 - \M}^2 &\leq \exp\left(-2\eta \singmin{\M} t\right) \frob{\Ut[0]^2 - \M}^2.
\end{align*}
\end{lemma}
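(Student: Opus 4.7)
}
The plan is to exploit the commutativity assumption aggressively by passing to simultaneous diagonalization, reducing the matrix iteration to $n$ independent scalar iterations, and then analyzing each one. First I would verify by induction that the commutativity of $\Ut$ with $\M$ is preserved by the update: if $\Ut \M = \M \Ut$ then every matrix appearing in the update rule ($\Ut$, $\Ut^2$, $\M$, and their differences) commutes pairwise with $\M$, and hence $\Utp$ commutes with $\M$ as well. The base case $\Ut[0] = \sqrt{\twonorm{\M}}\,\eye$ commutes trivially. Because of commutativity, the update simplifies to $\Utp = \Ut - 2\eta(\Ut^2 - \M)\Ut$.

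Next, writing the spectral decomposition $\M = \sum_i \lambda_i \v_i\vTr_i$, the preserved commutativity ensures that $\Ut = \sum_i u_i^{(t)} \v_i\vTr_i$ shares the same eigenbasis, so the matrix update decouples into $n$ scalar updates $u_i^{(t+1)} = u_i^{(t)} - 2\eta\bigl((u_i^{(t)})^2 - \lambda_i\bigr) u_i^{(t)}$. Letting $e_i^{(t)} \eqdef (u_i^{(t)})^2 - \lambda_i$, an algebraic manipulation (squaring the scalar update and subtracting $\lambda_i$) gives the identity
\begin{equation*}
e_i^{(t+1)} \;=\; e_i^{(t)} \,\bigl[\,1 - 4\eta (u_i^{(t)})^2 \bigl(1 - \eta e_i^{(t)}\bigr)\,\bigr],
\end{equation*}
which is the central recurrence for the analysis.

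The heart of the argument, and in my view the main obstacle, is maintaining the invariant $\sqrt{\lambda_i} \le u_i^{(t)} \le \sqrt{\twonorm{\M}}$ for all $t$. The upper bound is easy: as long as $e_i^{(t)} \ge 0$ and $2\eta e_i^{(t)} \le 1$, the factor $1 - 2\eta e_i^{(t)}$ lies in $[0,1]$ and $u_i^{(t+1)} \le u_i^{(t)}$. The lower bound is the delicate part; factoring $(u_i^{(t)})^2 - \lambda_i = (u_i^{(t)} - \sqrt{\lambda_i})(u_i^{(t)} + \sqrt{\lambda_i})$ reduces the requirement $u_i^{(t+1)} \ge \sqrt{\lambda_i}$ to
\begin{equation*}
1 \;\ge\; 2\eta\, u_i^{(t)} \bigl(u_i^{(t)} + \sqrt{\lambda_i}\bigr),
\end{equation*}
which, under the inductive upper bound $u_i^{(t)} \le \sqrt{\twonorm{\M}}$, is guaranteed by choosing $c \le 1/4$ in $\eta \le c/\twonorm{\M}$. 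This fixes the constant $c$ in the lemma.

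With the invariant in hand, I would bound each factor in the recurrence above: since $e_i^{(t)} \le \twonorm{\M}$ one has $1 - \eta e_i^{(t)} \ge 3/4$, while $(u_i^{(t)})^2 \ge \lambda_i \ge \singmin{\M}$, and also $4\eta (u_i^{(t)})^2(1 - \eta e_i^{(t)}) \le 4\eta \twonorm{\M} \le 1$. Hence the bracketed factor lies in $[0,\, 1 - 3\eta\, \singmin{\M}]$, and so $0 \le e_i^{(t+1)} \le (1 - 3\eta\, \singmin{\M})\, e_i^{(t)}$. Squaring and summing over $i$ gives
\begin{equation*}
\frob{\Utp^2 - \M}^2 \;\le\; \bigl(1 - 3\eta\, \singmin{\M}\bigr)^2 \frob{\Ut^2 - \M}^2 \;\le\; \exp\!\bigl(-2\eta\,\singmin{\M}\bigr)\, \frob{\Ut^2 - \M}^2,
\end{equation*}
and iterating $t$ times yields the claimed bound. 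The monotonicity invariant $0 \le e_i^{(t)} \le \twonorm{\M}$ is exactly what feeds back to keep the recurrence well-behaved, so the whole proof is really a careful induction combining invariance and contraction at each step.
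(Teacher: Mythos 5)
Your proposal is correct and follows essentially the same route as the paper: diagonalize in the common eigenbasis, decouple into scalar recurrences on $e_i^{(t)} = (u_i^{(t)})^2 - \lambda_i$, maintain upper and lower bounds on the $u_i^{(t)}$ by induction, and contract. The only (minor, and arguably cleaner) difference is your invariant $\sqrt{\lambda_i}\le u_i^{(t)}\le\sqrt{\twonorm{\M}}$, which keeps each $e_i^{(t)}\ge 0$ and the iterates monotone, whereas the paper allows overshoot and instead maintains the two-sided bounds $\singi{\M}/2 < \singi{\Ut}^2 < 2\twonorm{\M}$ while working with $\abs{e_i^{(t)}}$.
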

\begin{proof}
Since $\Ut[0]=\sqrt{\twonorm{\M}}\eye$ has the same eigenvectors as $\M$, it can be seen by induction that $\Ut$ has the same eigenvectors as $\M$ for every $t$. Every singular value $\singi{\Utp}$ can be written as
\begin{align}\label{eqn:sing-comm}
\singi{\Utp} = \left(1-2\eta\left(\singi{\Ut}^2-\singi{\M}\right)\right)\singi{\Ut}.
\end{align}

Firstly, this tells us that $\twonorm{\Ut} < \sqrt{2\twonorm{\M}}$ for every $t$. Verifying this is easy using induction. The statement holds for $t=0$ by hypothesis. Assuming it holds for $\Ut$, the induction step follows by considering the two cases $\twonorm{\Ut} \leq \sqrt{\twonorm{\M}}$ and $\sqrt{\twonorm{\M}} < \twonorm{\Ut} < \sqrt{2\twonorm{\M}}$ separately and using the assumption that $\eta < \frac{c}{\twonorm{\M}}$. A similar induction argument also tells us that $\singi{\Ut} > \sqrt{\frac{\singi{\M}}{2}}$. 
Eq.\eqref{eqn:sing-comm} can now be used to yield the following convergence equation:
\begin{align*}
& \abs{\singi{\Utp}^2 - \singi{\M}} \\
= &\abs{\singi{\Ut}^2-\singi{\M}}\cdot\left(1-4\eta\singi{\Ut}^2 \right. \\
& \quad\quad\quad \left. + 4 \eta^2 \singi{\Ut}^2 \left(\singi{\Ut}^2-\singi{\M}\right)\right)\\
\leq & \abs{\singi{\Ut}^2-\singi{\M}}\cdot \left(1-4\eta\singi{\Ut}^2  \right. \\
& \quad\quad\quad \left. + 8\eta^2 \singi{\Ut}^2 \twonorm{\M}\right) \\
\leq & \left(1-2\eta \singi{\Ut}^2\right) \abs{\singi{\Ut}^2-\singi{\M}} \\
\leq & \exp\left(- \eta \singmin{\M} \right) \abs{\singi{\Ut}^2-\singi{\M}},
\end{align*}
where we used the hypothesis on $\eta$ in the last two steps. Using induction gives us
\begin{align*}
\abs{\singi{\Ut}^2-\M} \leq \exp\left(-\eta \singmin{\M} t\right) \abs{\singi{\Ut[0]}^2-\M}.
\end{align*}
This can now be used to prove the lemma:
\begin{align*}
& \frob{\Ut^2 - \M}^2 = \sum_i \left(\singi{\Ut}^2 - \singi{\M}\right)^2 \\
\leq & \exp\left(-2\eta \singmin{\M} t\right) \sum_i \left(\singi{\Ut[0]}^2 - \singi{\M}\right)^2 \\
\leq & \exp\left(-2\eta \singmin{\M} t\right) \frob{\Ut[0]^2 - \M}^2.
\end{align*}
\end{proof}
Note that the above proof crucially used the fact that the eigenvectors of $\Ut$ and $\M$ are {\em aligned}, to reduce the matrix iterations to iterations only over the singular values. 
\subsection{Approach}\label{sec:approach}
As we begin to investigate the global convergence properties of Eq.\eqref{eqn:optprob}, the above argument breaks down
due to lack of alignment between the singular vectors of $\M$ and those of the iterates $\Ut$.
Let us now take a step back and consider non-convex optimization in general. There are two broad reasons why local
search approaches fail for these problems. The first is the presence of local minima and the second is the presence
of saddle points. Each of these presents different challenges: with local minima, local search approaches have no way
of certifying whether the convergence point is a local minimum or global minimum; while with saddle points, if the
iterates get close to a saddle point, the local neighborhood looks essentially flat and escaping the saddle point
may take exponential time.

The starting point of our work is the realization that the non-convex formulation of the matrix squareroot problem does not have any local minima. This can be argued using the continuity of the matrix squareroot function, and this statement is indeed true for many matrix factorization problems. The only issue to be contended with is the presence of saddle points. In order to overcome this issue, it suffices to show that the iterates of the algorithm never get too close to a saddle point. More concretely, while optimizing a function $f$ with iterates $\Ut$, it suffices to show that for every $t$, $\Ut$ always stay in some region $\mathcal{D}$ far from saddle points so that for all $\U, \U' \in \mathcal{D}$:
\begin{align}
&\frob{\grad f(\U) - \grad f(\U')} \leq L\frob{\U - \U'}  \label{eqn:hessF-cond} \\
&\frob{\grad f(\U)} \geq \sqrt{\ell \left( f(\U)-\fstar \right)}, \label{eqn:gradF-cond}
\end{align}
where $\fstar = \min_\U f(\U)$, and $L$ and $\ell$ are some constants. 
If we flatten matrix $\U$ to be $n^2$-dimensional vector, then Eq.\eqref{eqn:hessF-cond} is the standard smoothness assumption in optimization, and Eq.\eqref{eqn:gradF-cond} is known as gradient dominated property \citep{polyak1963gradient,nesterov2006cubic}.
If Eq.\eqref{eqn:hessF-cond} and Eq.\eqref{eqn:gradF-cond} hold, it follows from standard analysis that gradient descent with a step size $\eta < \frac{1}{L}$ achieves geometric convergence with
\begin{align*}
f(\Ut)-\fstar \leq \exp\left(-\eta \ell t/2\right) \left(f(\Ut[0])-\fstar\right).
\end{align*}
Since the gradient in our setting is $\left(\Ut^2 - \M\right) \Ut +\Ut\left(\Ut^2 - \M\right)$, in order to establish Eq.\eqref{eqn:gradF-cond}, it suffices to lower bound $\lammin{\Ut[t]}$. Similarly, in order to establish Eq.\eqref{eqn:hessF-cond}, it suffices to upper bound $\twonorm{\Ut[t]}$. Of course, we cannot hope to converge if we start from a saddle point. In particular Eq.\eqref{eqn:gradF-cond} will not hold for any $l > 0$.
The core of our argument consists of Lemmas~\ref{lem:opnorm-bound-MS} and~\ref{lem:smalleigval}, which essentially establish Eq.\eqref{eqn:hessF-cond} and Eq.\eqref{eqn:gradF-cond} respectively for the matrix squareroot problem Eq.\eqref{eqn:optprob}, with the resulting parameters $l$ and $L$ dependent on the starting point $\Ut[0]$. Lemmas~\ref{lem:opnorm-bound-MS} and~\ref{lem:smalleigval} accomplish this by proving upper and lower bounds respectively on $\twonorm{\Ut[t]}$ and $\lammin{\Ut[t]}$.
The proofs of these lemmas use only elementary linear algebra and we believe such results should be possible for many more matrix factorization problems.

%%% Local Variables: 
%%% mode: latex
%%% TeX-master: "noncvx_gradDes"
%%% End: 

%!TEX root = noncvx_gradDes.tex

\subsection{Guarantees}\label{sec:guarantees}
In this section, we will present our main results establishing that gradient descent on~\eqref{eqn:optprob} converges to the matrix square root at a geometric rate and its robustness to errors in each iteration.
\subsubsection{Noiseless setting}\label{sec:matsqrroot-noiseless}
The following theorem establishes geometric convergence of Algorithm~\ref{algo:gradDesMS} from a full rank initial point.
\begin{theorem}\label{thm:geomconv}
There exist universal numerical constants $c$ and $\ch$ such that if $\Ut[0]$ is a PD matrix and $\eta < \frac{c}{\compfactor \compfactorbeta^2 }$, then for every $t \in [T-1]$, we have $\Ut$ be a PD matrix with
\begin{align*}
\frob{\M - \Ut^2} \leq \exp\left(- \ch \eta \compfactorbeta^2 t\right) \frob{\M - \Ut[0]^2},
\end{align*}
where $\compfactor$ and $\compfactorbeta$ are defined as
\begin{align*}
	\compfactor &\eqdef \left(\frac{\max\left(\twonorm{\Ut[0]},\sqrt{\twonorm{\M}}\right)}{\min \left(\singmin{\Ut[0]},\sqrt{\singmin{\M}}\right)}\right)^3, \\
	\compfactorbeta &\eqdef \min \left(\singmin{\Ut[0]},\sqrt{\singmin{\M}}\right)
\end{align*}
\end{theorem}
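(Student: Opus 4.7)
The plan is to recast Eq.~\eqref{eqn:optprob} as smooth non-convex optimization satisfying a gradient dominance (Polyak--{\L}ojasiewicz) inequality, with constants controlled by two invariants that must be maintained across all iterations: an upper bound on $\twonorm{\Ut}$ and a lower bound on $\lammin{\Ut}$. Once these are in hand, the theorem follows from the standard analysis of gradient descent under smoothness plus gradient dominance, exactly along the lines sketched in Section~\ref{sec:approach} via Eqs.~\eqref{eqn:hessF-cond} and~\eqref{eqn:gradF-cond}.

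First I would prove, by induction on $t$, the pair of invariants
\begin{equation*}
\lammin{\Ut} \geq \tfrac{1}{2}\compfactorbeta \quad\text{and}\quad \twonorm{\Ut} \leq 2\mu, \qquad \text{where}\ \mu := \max\!\left(\twonorm{\Ut[0]},\sqrt{\twonorm{\M}}\right).
\end{equation*}
The base case is immediate from the definitions of $\compfactorbeta$ and $\mu$. For the inductive step I write $\Utp = \Ut - \eta(\Et\Ut + \Ut\Et)$ with $\Et = \Ut^2 - \M$, and analyze the extreme Rayleigh quotients $\vTr\Utp\v$ separately for unit $\v$ aligned with large and small eigendirections of $\Ut$. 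With the step size $\eta \leq c/(\compfactor\compfactorbeta^2) = c\compfactorbeta/\mu^3$, a single update moves the eigenvalues of $\Ut$ by at most a controlled fraction, and the iteration is naturally attractive towards $\sqrt{\M}$: along $\v$ with $\vTr\Ut^2\v > \vTr\M\v$ the update decreases $\vTr\Ut\v$, while in the opposite case it increases it.

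With these invariants in place the two remaining ingredients are short calculations. For gradient dominance, let $\Ut = \mat{V}\mat{D}\trans{\mat{V}}$ be an eigendecomposition with $\mat{D}_{ii} = \sigma_i$; in the $\mat{V}$-basis the gradient $\grad f(\Ut) = \Et\Ut + \Ut\Et$ has $(i,j)$-entry $(\sigma_i+\sigma_j)(\trans{\mat{V}}\Et\mat{V})_{ij}$, so
\begin{equation*}
\frob{\grad f(\Ut)}^2 = \sum_{i,j}(\sigma_i+\sigma_j)^2 (\trans{\mat{V}}\Et\mat{V})_{ij}^2 \geq 4\,\lammin{\Ut}^2 \frob{\Et}^2 \geq \compfactorbeta^2 f(\Ut),
\end{equation*}
establishing Eq.~\eqref{eqn:gradF-cond} with $\ell = \compfactorbeta^2$. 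For smoothness, since $\grad f(\U) = 2\U^3 - \M\U - \U\M$ is polynomial in $\U$, a routine expansion of $\U^3 - {\U'}^3$ on $\{\twonorm{\U}\leq 2\mu\}$ yields a Lipschitz constant $L = O(\mu^2)$ for $\grad f$. The standard smoothness+PL descent inequality then gives $f(\Utp)-\fstar \leq (1 - c\eta\compfactorbeta^2)(f(\Ut)-\fstar)$ for $\eta \leq c/L$, and iterating this $t$ times and taking a square root produces exactly the claimed bound on $\frob{\M-\Ut^2}$.

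The main obstacle is the lower bound $\lammin{\Ut} \geq \compfactorbeta/2$. The saddle points of $f$ live precisely at rank-deficient $\U$ (e.g.\ $\U = 0$), so losing this invariant would let the iterates drift towards a saddle and collapse the gradient dominance constant. The delicate part is that in $\vTr\Utp\v = \vTr\Ut\v - \eta\,\vTr(\Et\Ut + \Ut\Et)\v$ the gradient term has a helpful diagonal piece $2(\vTr\Et\v)(\vTr\Ut\v)$ but also off-diagonal cross terms controlled only by $\frob{\Et}\cdot\twonorm{\Ut}$; dominating these cross terms requires the upper invariant together with the small step size $\eta = O(\compfactorbeta/\mu^3)$, which is precisely why the factor $\compfactor^{-1}$ appears in the final rate rather than, say, $\cnM^{-1}$.
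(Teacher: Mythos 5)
Your overall architecture is exactly the paper's: maintain an upper bound on $\twonorm{\Ut}$ and a lower bound on $\lammin{\Ut}$ by induction, deduce smoothness and gradient dominance on the resulting region, and finish with the standard descent inequality under the PL condition. The smoothness bound and the gradient-dominance computation are correct (your eigenbasis derivation of $\frob{\grad f(\Ut)}^2 \ge 4\lammin{\Ut}^2\frob{\Ut^2-\M}^2$ is in fact a slightly cleaner route to the same inequality the paper obtains by expanding traces), and the final assembly is routine.

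The genuine gap is in the step you yourself flag as the main obstacle: the inductive lower bound on $\lammin{\Utp}$. Your Rayleigh-quotient dichotomy is only valid at eigenvectors of $\Ut$ (where $\vTr(\Et\Ut+\Ut\Et)\v = 2\sigma\,\vTr\Et\v$), and evaluating $\vTr\Utp\v$ at such $\v$ only \emph{upper}-bounds $\lammin{\Utp}$; to lower-bound it you must control all unit $\v$, which is where the cross terms live. Your proposed control of these cross terms by $\frob{\Et}\cdot\twonorm{\Ut}$ and the small step size does not close quantitatively: the per-step additive loss it yields is of order $\eta\,\twonorm{\M}\twonorm{\Ut} \sim \eta\mu^3/\compfactorbeta^2\cdot\compfactorbeta^2$, i.e.\ $O(c\compfactorbeta)$, while the restoring force from the $\M$-terms is only $O(\eta\singmin{\M}\compfactorbeta) = O(c\compfactorbeta^4/\mu^3)$ per step --- smaller by a factor of $\compfactor$. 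Summed over the $\Theta(1/(\eta\compfactorbeta^2))$ iterations needed for convergence, the naive cross-term bound destroys the invariant. The paper's proof of Lemma~\ref{lem:smalleigval} avoids this with a specific algebraic device: split
\begin{equation*}
\Utp = \left(\tfrac{3}{4}\Ut - 2\eta\Ut^3\right) + \left(\tfrac{1}{4}\Ut + \eta(\M\Ut+\Ut\M)\right),
\end{equation*}
note the first summand commutes with $\Ut$ (no cross terms), and rewrite the second as $\left(\tfrac{1}{2}\eye+\eta\M\right)\Ut\left(\tfrac{1}{2}\eye+\eta\M\right) - \eta^2\M\Ut\M$, so that the first-order-in-$\eta$ cross terms are absorbed into a congruence transformation (which preserves positive definiteness and admits a clean $\lammin{}$ lower bound) and only an $O(\eta^2)$ residual remains; Weyl's inequality then combines the two summands. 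Without this identity, or an equivalent mechanism that kills the first-order cross terms, your induction does not go through. Separately, your target constant $\compfactorbeta/2$ for the floor is likely too aggressive (the dynamics can undershoot $\sqrt{\singmin{\M}}$ by a constant factor); the paper settles for $\min(\singmin{\Ut[0]},\sqrt{\singmin{\M}}/10)$, which is harmless for the theorem.
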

\textbf{Remarks}:
\begin{itemize}
\item	This result implies global geometric convergence. Choosing $\eta = \frac{c}{\compfactor \compfactorbeta^2}$, in order to obtain an accuracy of $\epsilon$, the number of iterations required would be $\order{\compfactor \log \frac{\frob{\M - \Ut[0]^2}}{\epsilon}}$.
\item	 Note that saddle points of~\eqref{eqn:optprob} must be rank degenerate matrix ($\sigma_{\min}(\U) = 0$) and starting Algorithm~\ref{algo:gradDesMS} from a point close to the rank degenerate surface takes a long time to get away from the saddle surface. Hence, as $\Ut[0]$ gets close to being rank degenerate, convergence rate guaranteed by Theorem~\ref{thm:geomconv} degrades (as $\cn{\Ut[0]}^3$). It is possible to obtain a smoother degradation with a finer analysis, but in the current paper, we trade off optimal results for a simple analysis.
\item	The convergence rate guaranteed by Theorem~\ref{thm:geomconv} also depends on the relative scales of $\Ut[0]$ and $\M$ (say as measured by $\twonorm{\Ut[0]}^2/\twonorm{\M}$) and is best if it is close to $1$.
\item  We believe that it is possible to extend our analysis to the case where $\M$ is low rank (PSD). In this case, suppose $\text{rank}(\M) = k$, and let $\U^\star$ be the k-dimensional subspace in which $\M$ resides. Then, saddle points should satisfy $\sigma_k(\trans\U \U^\star) = 0$.
%\item	Finding a matrix $\Ut[0]$ satisfying the hypothesis of Theorem~\ref{thm:geomconv} is easy. For instance, we first compute $\lambda$ such that  and choose $\Ut[0] = \lambda \eye$.
%\item	The constants $\sqrt{3}$ and $\frac{1}{10}$ in the bounds on $\twonorm{\Ut[0]}$ and $\singmin{\Ut[0]}$ are arbitrary. Choosing any other constants in their place would only change the requirement on learning rate by a constant and give the same result.
\end{itemize}
A simple corollary of this result is when we choose $\Ut[0] = \lambda \eye$, where $\twonorm{\M} \leq \lambda \leq 2 \twonorm{\M}$ (such a $\lambda$ can be found in time $\order{n^2}$~\cite{musco2015stronger}). 
%the runtime of Algorithm~\ref{algo:gradDesMS} is $\order{n^{\matmult} \cnM^{3/2} \log \frac{\frob{\M - \Ut[0]^2}}{\epsilon}}$, where $\matmult$ is the matrix multiplication exponent and $\cnM$ is the condition number of $\M$.
\begin{corollary}
	Suppose we choose $\Ut[0] = \lambda \eye$, where $\twonorm{\M} \leq \lambda \leq 2 \twonorm{\M}$. Then $\frob{\M - \Ut[T]^2} \leq \epsilon$ for $T \geq \order{\cnM^{\frac{3}{2}}\log \frac{\frob{\M - \Ut[0]^2}}{\epsilon}}$.
\end{corollary}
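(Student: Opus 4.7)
The plan is to derive the corollary as a direct specialization of Theorem~\ref{thm:geomconv} to the starting point $\Ut[0]=\lambda\eye$. First I would compute $\twonorm{\Ut[0]}$ and $\singmin{\Ut[0]}$ for this specific choice: since $\Ut[0]$ is a scalar multiple of the identity, both quantities equal $\lambda$, and in particular $\Ut[0]$ is PD so the hypothesis of Theorem~\ref{thm:geomconv} is satisfied. This means the two $\max$/$\min$ expressions appearing in the definitions of $\compfactor$ and $\compfactorbeta$ collapse, and I only need to compare $\lambda$ with $\sqrt{\twonorm{\M}}$ and with $\sqrt{\singmin{\M}}$ to decide which branch is active.

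Next I would bound $\compfactor$ and $\compfactorbeta$ in terms of $\cnM$. With $\lambda$ on the order of $\sqrt{\twonorm{\M}}$ (the relevant scaling so that $\Ut[0]^2\approx\twonorm{\M}\eye$ is comparable to $\M$), we have $\lambda\geq\sqrt{\twonorm{\M}}\geq\sqrt{\singmin{\M}}$, which gives
\[
\max\!\left(\twonorm{\Ut[0]},\sqrt{\twonorm{\M}}\right)=\lambda,\qquad \min\!\left(\singmin{\Ut[0]},\sqrt{\singmin{\M}}\right)=\sqrt{\singmin{\M}}.
\]
Substituting into Eq.~\eqref{eqn:compfactor} gives $\compfactor\leq\bigl(\sqrt{2\twonorm{\M}}/\sqrt{\singmin{\M}}\bigr)^3=2^{3/2}\cnM^{3/2}$, while $\compfactorbeta=\sqrt{\singmin{\M}}$.

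Finally I would feed these into the convergence bound of Theorem~\ref{thm:geomconv}. Choosing $\eta=c/(\compfactor\compfactorbeta^2)=\Theta\bigl(1/(\cnM^{3/2}\,\singmin{\M})\bigr)$ as permitted by the theorem, the guarantee $\frob{\M-\Ut^2}\leq\exp(-\ch\eta\compfactorbeta^2 t)\frob{\M-\Ut[0]^2}$ becomes a geometric decay at rate $\Theta(1/\compfactor)=\Theta(1/\cnM^{3/2})$ per iteration. Setting the right-hand side to $\epsilon$ and solving for $T$ yields
\[
T=O\!\left(\frac{1}{\eta\compfactorbeta^2}\log\frac{\frob{\M-\Ut[0]^2}}{\epsilon}\right)=O\!\left(\compfactor\log\frac{\frob{\M-\Ut[0]^2}}{\epsilon}\right)=O\!\left(\cnM^{3/2}\log\frac{\frob{\M-\Ut[0]^2}}{\epsilon}\right),
\]
which is the claim. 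There is no real obstacle here; the only care needed is in tracking which branches of the $\max$ and $\min$ in $\compfactor$ are active for this particular $\Ut[0]$, and noting that the condition-number dependence emerges precisely because $\lambda$ is an $O(1)$-factor approximation to $\sqrt{\twonorm{\M}}$ rather than a dimension- or $\singmin{\M}$-dependent quantity.
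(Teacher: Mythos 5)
Your derivation is correct and is exactly the intended argument: the paper offers no separate proof of this corollary, treating it as an immediate specialization of Theorem~\ref{thm:geomconv}, which is what you do. One point worth making explicit: your bound $\compfactor \le 2^{3/2}\cnM^{3/2}$ uses $\sqrt{\twonorm{\M}} \le \lambda \le \sqrt{2\twonorm{\M}}$, i.e.\ you read the hypothesis as a condition on $\lambda^2$ rather than on $\lambda$; this is indeed the only reading under which the stated $\cnM^{3/2}$ dependence follows (with the literal condition $\twonorm{\M}\le\lambda\le 2\twonorm{\M}$ one gets $\compfactor=\bigl(\lambda/\sqrt{\singmin{\M}}\bigr)^3\approx\bigl(\twonorm{\M}\cnM\bigr)^{3/2}$, not $\cnM^{3/2}$), so you have correctly identified and repaired what must be a typo in the corollary's statement.
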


\subsubsection{Noise Stability}\label{sec:matsqrroot-stability}
Theorem~\ref{thm:geomconv} assumes that the gradient descent updates are performed with out any error. This is not practical. For instance, any implementation of Algorithm~\ref{algo:gradDesMS} would incur rounding errors. Our next result addresses this issue by showing that Algorithm~\ref{algo:gradDesMS} is stable in the presence of small, arbitrary errors in each iteration. This will establish the stability of our algorithm in the presence of round-off errors for instance. Formally, we consider in every gradient step, we incur an error $\Errt$. 
% Algorithm~\ref{algo:gradDesMS-err}.

% \begin{algorithm}[t]
% \caption{Gradient descent with errors for matrix square root}\label{algo:gradDesMS-err}
% \begin{algorithmic}
% \renewcommand{\algorithmicrequire}{\textbf{Input: }}
% \renewcommand{\algorithmicensure}{\textbf{Output: }}
% \REQUIRE $\M$, PSD matrix $\Ut[0], \eta, T$
% \ENSURE  $\U$
% \FOR{$t = 0,\cdots,T-1$}
% \STATE $\Utp = \Ut - \eta \left(\Ut^2 - \M\right) \Ut - \eta \Ut\left(\Ut^2 - \M\right) + \Errt$
% \ENDFOR
% \STATE \textbf{Return} $\Ut[T]$.
% \end{algorithmic}
% \end{algorithm}

The following theorem shows that as long as the errors $\Errt$ are small enough, Algorithm~\ref{algo:gradDesMS} recovers the true squareroot upto an accuracy of the error floor. The proof of the theorem follows fairly easily from that of Theorem~\ref{thm:geomconv}.
\begin{theorem}\label{thm:stability}
There exist universal numerical constants $c$ and $\ch$ such that the following holds:
Suppose $\Ut[0]$ is a PD matrix and $\eta < \frac{c}{\compfactor\compfactorbeta^2}$ where $\compfactor$ and $\compfactorbeta$ are defined as before:
\begin{align*}
	\compfactor &\eqdef \left(\frac{\max\left(\twonorm{\Ut[0]},\sqrt{\twonorm{\M}}\right)}{\min \left(\singmin{\Ut[0]},\sqrt{\singmin{\M}}\right)}\right)^3, \\
	\compfactorbeta &\eqdef \min \left(\singmin{\Ut[0]},\sqrt{\singmin{\M}}\right).
\end{align*}
 Suppose further that $\twonorm{\Errt}< \frac{1}{300}\eta {\singmin{\M}} \compfactorbeta$. Then, for every $t \in [T-1]$, we have $\Ut$ be a PD matrix with
\begin{align*}
&\frob{\M - \Ut^2} 
\leq \exp\left(- \ch \eta \compfactorbeta^2  t\right) \frob{\M - \Ut[0]^2} \\
&\quad + 4 \max(\twonorm{\Ut[0]},\sqrt{3 \twonorm{\M}}) \sum_{s=0}^{t-1} e^{- \ch \eta \compfactorbeta^2 (t-s-1)} \frob{\Errt[s]}.
\end{align*}
\end{theorem}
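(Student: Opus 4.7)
The plan is to mirror the noiseless analysis of Theorem~\ref{thm:geomconv}, treating the additive error $\Errt$ as a perturbation that is small enough to preserve the structural bounds on $\Ut$ supplied by Lemmas~\ref{lem:opnorm-bound-MS} and~\ref{lem:smalleigval}, then to propagate the noise through the one-step decrease estimate for $\frob{\M-\Ut^2}$ and finally unroll the resulting recursion. Concretely, I would run a strong induction over $t$ maintaining simultaneously: (i) $\Ut$ is PD, (ii) the upper bound $\twonorm{\Ut}\leq \max(\twonorm{\Ut[0]},\sqrt{3\twonorm{\M}})$ together with a lower bound $\lammin{\Ut}\gtrsim \compfactorbeta$ of the same order as in the noiseless proof, and (iii) the target inequality of the theorem. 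The base case $t=0$ is immediate.

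For the inductive step, write $\Utp = \Utp^{\mathrm{cl}} + \Errt$ where $\Utp^{\mathrm{cl}} \eqdef \Ut - \eta(\Ut^2-\M)\Ut - \eta\Ut(\Ut^2-\M)$ is the clean gradient step. Since the hypothesis $\twonorm{\Errt}\le \tfrac{1}{300}\eta\singmin{\M}\compfactorbeta$ makes the perturbation strictly smaller than the per-step margin by which Lemmas~\ref{lem:opnorm-bound-MS} and~\ref{lem:smalleigval} improve the clean bounds, a Weyl-type argument carries the norm bounds on $\Utp^{\mathrm{cl}}$ over to $\Utp$, preserving PD-ness and clauses (i)--(ii) for the next iterate. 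Next, expanding the square,
\[
\M - \Utp^2 = \bigl(\M - (\Utp^{\mathrm{cl}})^2\bigr) - \Utp^{\mathrm{cl}}\Errt - \Errt\,\Utp^{\mathrm{cl}} - \Errt^2,
\]
I bound the first summand by Theorem~\ref{thm:geomconv}'s noiseless contraction $(1-\ch\eta\compfactorbeta^2)\frob{\M-\Ut^2}$, the two cross terms by $2\twonorm{\Utp^{\mathrm{cl}}}\,\frob{\Errt}$ (invoking the upper-norm bound), and the quadratic term by $\twonorm{\Errt}\,\frob{\Errt}$, which is dominated by the cross terms thanks to the smallness of $\twonorm{\Errt}$. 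This yields the one-step recursion
\[
\frob{\M - \Utp^2} \le (1 - \ch\eta\compfactorbeta^2)\,\frob{\M - \Ut^2} + 4\max\!\bigl(\twonorm{\Ut[0]},\sqrt{3\twonorm{\M}}\bigr)\,\frob{\Errt}.
\]
Unrolling this geometric recursion and using $1-x\le e^{-x}$ produces exactly the claimed bound.

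The main obstacle is not the propagation of the error through the contraction, which is routine once the norm bounds are in place, but rather the inductive preservation of $\lammin{\Ut}\gtrsim \compfactorbeta$ in the presence of arbitrary additive errors. The proof of Lemma~\ref{lem:smalleigval} exploits a delicate algebraic structure of the clean update to prevent small eigenvalues of $\Ut$ from collapsing; adding $\Errt$ can, by Weyl, shift any eigenvalue by up to $\twonorm{\Errt}$ in the wrong direction, and the constant $\tfrac{1}{300}$ in the noise hypothesis is precisely what is required so that the per-step margin from that lemma strictly dominates this shift. This is also why the noise tolerance scales as $\eta\singmin{\M}\compfactorbeta$ rather than $\eta\compfactorbeta^2$: it is the lower-spectral-bound argument, not the energy decrease, that dictates how much perturbation is admissible.
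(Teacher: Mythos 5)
Your proposal is correct and follows essentially the same route as the paper: the same decomposition of the noisy iterate into a clean gradient step plus $\Errt$, the same expansion and bounding of the cross and quadratic error terms, the same unrolling of the geometric recursion, and crucially the same observation that the minimum-eigenvalue lemma must be strengthened to a strict per-step improvement (the paper's Lemma~\ref{lem:smalleigval-strong}) so that the Weyl shift of $\twonorm{\Errt}$ cannot erode the lower bound. Your closing remark on why the noise tolerance scales as $\eta\singmin{\M}\compfactorbeta$ matches exactly the role that strengthened lemma plays in the paper's argument.
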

\textbf{Remarks}:
\begin{itemize}
%\item	As remarked after Theorem~\ref{thm:geomconv}, finding a $\Ut[0]$ satisfying the hypothesis of Theorem~\ref{thm:stability} is straightforward by computing $\twonorm{\M}$ approximately and starting with, for instance, a scaled version of $\eye$.
\item	Since the errors above are multiplied by a decreasing sequence, they can be bounded to obtain a limiting accuracy of $\order{\compfactor (\twonorm{\Ut[0]}+\sqrt{\twonorm{\M}}) (\sup_s \frob{\Errt[s]})}$.
%\begin{align*}
%&4 \max(\twonorm{\Ut[0]},\sqrt{3 \twonorm{\M}}) \sum_{s=0}^{t-1} \exp\left(- \ch \eta \min(\singmin{\Ut[0]}^2, {\singmin{\M}}) (t-s-1) \right) \frob{\Errt[s]} \\
%&\leq 4 \max(\twonorm{\Ut[0]},\sqrt{3 \twonorm{\M}}) \left(\sum_{s=0}^{\infty} \exp\left(- \ch \eta \min(\singmin{\Ut[0]}^2, {\singmin{\M}}) (t-s-1) \right)\right) \sup_s \frob{\Errt[s]} \\
%&\leq 4 \ch \left(\frac{\min(\singmin{\Ut[0]}^2, {\singmin{\M}}) }{\max(\twonorm{\Ut[0]}^2, {\twonorm{\M}})}\right)^{3/2}  \max(\twonorm{\Ut[0]},\sqrt{3 \twonorm{\M}}) (\sup_s \frob{\Errt[s]}).
%\end{align*}
%This means that Algorithm~\ref{algo:gradDesMS-err} achieves a limiting accuracy of
%\begin{align*}
%	\order{\left(\frac{\min(\singmin{\Ut[0]}^2, {\singmin{\M}}) }{\max(\twonorm{\Ut[0]}^2, {\twonorm{\M}})}\right)^{3/2}  \max(\twonorm{\Ut[0]},\sqrt{3 \twonorm{\M}}) (\sup_s \frob{\Errt[s]})}.
%\end{align*}
% We are not aware of any other matrix squareroot algorithms that run in matrix multiplication time and are robust to errors in multiple iterations in this fashion.
\item	If there is error in only the first iteration i.e., $\Errt = 0$ for $t \neq 0$, then the initial error $\Errt[0]$ is attenuated with every iteration,
\begin{align*}
&\frob{\M-\Ut^2} \leq \exp\left(- \ch \eta \compfactorbeta^2 t\right) \frob{\M - \Ut[0]^2} \\
&\quad\quad\quad + 6 \max(\twonorm{\Ut[0]}^2, {\twonorm{\M}}) e^{- \ch \eta \compfactorbeta^2 (t-1)} \frob{\Errt[0]}.
\end{align*}
That is, our dependence on $\frob{\Errt[0]}$ is exponentially decaying with respect to time $t$. On the contrary, best known results only guarantees the error dependence on $\frob{\Errt[0]}$ will not increase significantly with respect to time $t$ \citep{Higham2008}.

% Contrast this to the stability results for Newton variants where the best known results state that error in a single iteration is not amplified by more than a constant factor \cite{Higham2008}.

%\item	Just as in Theorem~\ref{thm:geomconv}, if we choose $\eta = \frac{c}{\sqrt{\cnM} \twonorm{\M}}$, in order to obtain an accuracy of $\epsilon$, the number of iterations required would be $\order{\cnM^{3/2} \log \frac{\frob{\M - \Ut[0]^2}}{\epsilon}}$. So the total time
%complexity of the algorithm is $\order{n^{\matmult} \cnM^{3/2} \log \frac{\frob{\M - \Ut[0]^2}}{\epsilon}}$, where $\matmult$ is the matrix multiplication exponent.
%\item	The constants $\sqrt{3}$ and $\frac{1}{10}$ in the bounds on $\twonorm{\Ut[0]}$ and $\singmin{\Ut[0]}$ are arbitrary. Choosing any other constants in their place would only change the requirement on the learning rate by a constant.
\end{itemize}

\subsubsection{Lower Bound}\label{sec:lowerbound}
We also prove the following lower bound showing that gradient descent with a fixed step size requires $\Om{\cnM}$ iterations to achieve an error of $\order{\singmin{\M}}$.
% \begin{lemma}\label{lem:lb}
% There exists PSD matrices $\M$, and $\Ut[0]$ with $\twonorm{\Ut[0]} \leq \sqrt{3 \twonorm{\M}}$ and $\singmin{\Ut[0]} \geq \frac{1}{10}\sqrt{\singmin{\M}}$, and constants $c$ and $\ct$ such that
% \begin{itemize}
% \item	if we choose step size $\eta \geq \frac{c}{\twonorm{\M}}$, then for all $t\ge 1$, $\frob{\Ut^2 - \M} \ge \twonorm{\M}$, and
% \item	if we choose step size $\eta \leq \frac{c}{\twonorm{\M}}$, we will have $\frob{\Ut^2 - \M} \ge \frac{1}{4}\singmin{\M}$ for all 
% $t \le \ct \cdot \cnM$.
% \end{itemize}
% \end{lemma}
\begin{theorem}\label{thm:lb}
For any value of $\cnM$, we can find a matrix $\M$ such that, for any step size $\eta$, there exists an initialization $\Ut[0]$ that has the same eigenvectors as $\M$, with $\twonorm{\Ut[0]} \leq \sqrt{3 \twonorm{\M}}$ and $\singmin{\Ut[0]} \geq \frac{1}{10}\sqrt{\singmin{\M}}$, such that
we will have $\frob{\Ut^2 - \M} \ge \frac{1}{4}\singmin{\M}$ for all 
$t \le \cnM$.
\end{theorem}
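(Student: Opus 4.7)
The plan is to construct an explicit two-dimensional witness and do a case split on the step size. Take $\M = \mathrm{diag}(1, 1/\cnM)$, so that $\twonorm{\M} = 1$ and $\singmin{\M} = 1/\cnM$. Since the adversarial initialization $\Ut[0]$ is required to share eigenvectors with $\M$, every iterate $\Ut$ produced by Algorithm~\ref{algo:gradDesMS} stays diagonal, and writing $\Ut = \mathrm{diag}(a_t, b_t)$ the matrix recursion decouples into two independent scalar non-convex iterations
\begin{align*}
a_{t+1} = a_t\bigl(1 - 2\eta(a_t^2 - 1)\bigr), \qquad b_{t+1} = b_t\bigl(1 - 2\eta(b_t^2 - 1/\cnM)\bigr),
\end{align*}
with $\frob{\Ut^2 - \M}^2 = (a_t^2 - 1)^2 + (b_t^2 - 1/\cnM)^2$. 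It therefore suffices, for each $\eta > 0$, to exhibit a valid $(a_0, b_0)$ for which at least one of the two squared deviations stays above $1/(16\cnM^2)$ throughout $t \le \cnM$.

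The key tool for either coordinate is the factored update
\begin{align*}
b_{t+1} - \tfrac{1}{\sqrt{\cnM}} = \Bigl(b_t - \tfrac{1}{\sqrt{\cnM}}\Bigr)\Bigl(1 - 2\eta\, b_t\bigl(b_t + \tfrac{1}{\sqrt{\cnM}}\bigr)\Bigr),
\end{align*}
and its $a$-analogue. For step sizes $\eta$ bounded by an absolute constant $\eta_0$, I would set $a_0 = 1$ (which pins $a_t \equiv 1$ and makes the $a$-error identically zero) and $b_0 = 2/\sqrt{\cnM}$; this meets $\twonorm{\Ut[0]} = 1 \le \sqrt 3$ and $\singmin{\Ut[0]} = 2/\sqrt{\cnM} \ge 1/(10\sqrt{\cnM})$. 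A simple induction via the factorization confines $b_t$ to $[1/\sqrt{\cnM}, 2/\sqrt{\cnM}]$ and makes the per-step contraction factor on $b_t - 1/\sqrt{\cnM}$ at worst $1 - 12\eta/\cnM$; iterating $\cnM$ times gives $b_\cnM - 1/\sqrt{\cnM} \ge e^{-12\eta_0}/\sqrt{\cnM}$. Multiplying by $b_\cnM + 1/\sqrt{\cnM} \ge 1/\sqrt{\cnM}$ then lower-bounds $b_\cnM^2 - 1/\cnM$ by $1/(4\cnM)$ once $\eta_0$ is small enough.

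For step sizes $\eta > \eta_0$, I would swap roles and use the $a$-coordinate: set $b_0 = 1/\sqrt{\cnM}$ (at its fixed point so $b_t \equiv 1/\sqrt{\cnM}$ contributes nothing) and pick $a_0$ in the allowed band $[1/(10\sqrt{\cnM}), \sqrt 3]$ depending on $\eta$. Linearising the $a$-recursion around $a = 1$ gives $\delta_{t+1} \approx (1-4\eta)\delta_t$ with $\delta_t = a_t - 1$. For $\eta > 1/2$ the factor $|1-4\eta|$ exceeds $1$, so any perturbation $a_0 = 1 + c/\sqrt{\cnM}$ produces $|\delta_t|$ that grows geometrically out of $c/\sqrt{\cnM}$ until it saturates at constant order, after which $|a_t^2 - 1| = \Theta(1)$ is maintained, easily beating $1/(4\cnM)$. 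For the transitional band $\eta \in (\eta_0, 1/2]$ I would instead return to the $b$-analysis and replace the loose worst-case contraction factor $1 - 12\eta/\cnM$ with a trajectory-dependent product: since $b_t$ is monotonically decreasing toward $1/\sqrt{\cnM}$, the true factor $1 - 2\eta b_t(b_t + 1/\sqrt{\cnM})$ relaxes from $1 - 12\eta/\cnM$ down to $1 - 4\eta/\cnM$, and a careful evaluation of $\prod_{s<\cnM}(1 - 2\eta b_s(b_s + 1/\sqrt{\cnM}))$ (comparable to the continuous-time ODE $\dot c = -2(\eta/\cnM)c(c^2-1)$ for $c_t = b_t\sqrt\cnM$) still gives $b_\cnM^2 - 1/\cnM = \Omega(1/\cnM)$.

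The main obstacle I expect is the large-$\eta$ sub-case ($\eta > 1/2$): the 1D $a$-map can be chaotic --- it reduces to a Chebyshev cubic $y_{t+1} = 4y_t^3 - 3y_t$ with $y_t = a_t^2 - 1$ precisely at $\eta = 1$ --- so once $|\delta_t|$ saturates at constant order, the orbit can in principle approach the bad points $a = \pm 1$. The saving grace is that the target bound $|a_t^2 - 1| \ge 1/(4\cnM)$ is extremely weak: during the linearised-expansion phase one has $|a_t^2 - 1| \ge |\delta_t| \ge c/\sqrt\cnM \gg 1/(4\cnM)$ by monotonicity of $|\delta_t|$, and after saturation a uniform lower bound on the distance of the forward orbit to $\{-1, +1\}$ (a pointwise check on the first few post-saturation iterates combined with bounded-invariant-set considerations) closes the argument.
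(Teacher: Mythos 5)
Your overall skeleton (a $2\times 2$ diagonal witness, decoupled scalar recursions, a case split on $\eta$) matches the paper's, and your small-$\eta$ analysis is essentially sound, but the large-$\eta$ half of the argument has a genuine gap that you yourself flag and do not close. For $\eta > 1/2$ you propose to seed the $a$-coordinate with a perturbation of size $c/\sqrt{\cnM}$, let it expand geometrically under the factor $|1-4\eta|>1$, and then control the post-saturation orbit by "a pointwise check on the first few post-saturation iterates combined with bounded-invariant-set considerations." This is not a proof: after saturation the map $a \mapsto a(1-2\eta(a^2-1))$ is genuinely chaotic (as you note, conjugate to a Chebyshev cubic at $\eta=1$), its orbits typically equidistribute, and a generic orbit \emph{will} visit a $O(1/\cnM)$-neighborhood of the fixed point within $O(\cnM)$ steps; a preimage-measure count shows the set of bad initializations need not be small, so exhibiting a specific good $a_0$ requires a real argument that is missing. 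The paper sidesteps this entirely with a much simpler device: for $\eta \ge \frac{1}{4\twonorm{\M}}$ it sets the top eigenvalue of $\Ut[0]$ to $\sqrt{\beta\twonorm{\M}}$ with $\beta = \frac{1}{2\eta\twonorm{\M}}+1 \le 3$, so that a \emph{single} gradient step maps that eigenvalue exactly to $0$ --- a degenerate fixed point of the scalar iteration --- after which the iterate is stuck forever with error at least $\twonorm{\M}$. No dynamics need to be tracked.

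There is a second, quantitative problem in your transitional band $\eta \in (\eta_0, 1/2]$. There the $a$-coordinate contracts ($|1-4\eta|<1$), so expansion is unavailable, and your $b$-coordinate argument (start at $2/\sqrt{\cnM}$, above the fixed point, and bound the multiplicative contraction over $\cnM$ steps) yields only $b_{\cnM}^2 - \singmin{\M} \ge c(\eta)\singmin{\M}$ with $c(\eta)$ of order $e^{-\Theta(\eta)}$; a careful accounting of the product $\prod_t(1-2\eta b_t(b_t+1/\sqrt{\cnM}))$ near $\eta = 1/2$ gives a constant well below the required $\tfrac14$, so you would prove $\Omega(\singmin{\M})$ but not the stated bound. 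The fix, again as in the paper, is to approach the fixed point from \emph{below}: starting the second eigenvalue at $\tfrac12\sqrt{\singmin{\M}}$, the per-step \emph{additive} progress is at most $\eta\singmin{\M}\le \frac{1}{4\cnM}\sqrt{\singmin{\M}}$ (in the normalized variable, at most $\frac{1}{4\cnM}$), so after $\cnM$ steps the normalized eigenvalue is still at most $\tfrac34$ and the error is at least $\tfrac{7}{16}\singmin{\M}$, with the constant falling out immediately and the case boundary $\eta = \frac{1}{4\twonorm{\M}}$ meeting the exact-kill construction above with no gap in between.
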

This lemma shows that the convergence rate of gradient descent fundamentally depends on the condition number $\cnM$, even if we start with a matrix that has the same eigenvectors and similar scale as $\M$. In this case, note that the lower bound of Theorem~\ref{thm:lb} is off from the upper bound of Theorem~\ref{thm:geomconv} by $\sqrt{\cnM}$.
Though we do not elaborate in this paper, it is possible to formally show that a dependence of $\cnM^{3/2}$ is the best
bound possible using our argument (i.e., one along the lines of Section~\ref{sec:approach}).
\section{Proof Sketch for Theorem~\ref{thm:geomconv}} \label{sec:proofsketch}
In this section, we will present the proof of Theorem~\ref{thm:geomconv}. To make our strategy more concrete and transparent, we will leave the full proofs of some technical lemmas in Appendix \ref{sec:matfact-result}.

% Our framework can be applied to problems with global minima and saddle
At a high level, our framework consists of following three steps:
\begin{enumerate}
\item Show all bad stationary points lie in a measure zero set $\{\U| \phi(\U) = 0\}$ for some constructed potential function $\phi(\cdot)$. In this paper, for the matrix squareroot problem, we choose the potential function $\phi(\cdot)$ to be the smallest singular value function $\singmin{\cdot}$.

\item Prove for any $\epsilon>0$, if initial $\U_0 \in \mathcal{D}_\epsilon = \{\U | ~|\phi(\U)| > \epsilon\}$ and the stepsize is chosen appropriately, then we have all iterates $\U_t \in \mathcal{D}_\epsilon$. That is, updates will always keep away from bad stationary points.

\item Inside regions $\mathcal{D}_\epsilon$, show that the optimization function satisfies good properties such as smoothness and gradient-dominance, which establishes convergence to a global minimum with good rate.
\end{enumerate}
Since we can make $\epsilon$ arbitrarily small and since $\{\U| \phi(\U) = 0\}$ is a measure zero set, this essentially establishes convergence from a (Lebesgue) measure one set, proving global convergence. 

We note that step 2 above implies that no stationary point found in the set $\{\U| \phi(\U) = 0\}$ is a local minimum -- it must either be a saddle point or a local maximum. This is because starting at any point outside $\{\U| \phi(\U) = 0\}$ does not converge to $\{\U| \phi(\U) = 0\}$. Therefore, our framework can be mostly used for non-convex problems with saddle points but no spurious local minima.

% In step 3, we show both smoothness and gradient-dominance \citep{polyak1963gradient,nesterov2006cubic} which lead to geometric convergence.

Before we proceed with the full proof, we will first illustrate the three steps above for a simple, special case where $n=2$ and all relevant matrices are diagonal. Specifically, we choose target matrix $\M$ and parameterize $\U$ as:
\begin{align*}
\M = \begin{pmatrix}
4& 0 \\ 0&2
\end{pmatrix},
\quad\quad
\U = \begin{pmatrix}
x& 0 \\ 0&y
\end{pmatrix}.
\end{align*}
Here $x$ and $y$ are unknown parameters. Since we are concerned with $\U \succeq 0$, we see that $x,y \geq 0$. The reason we restrict ourselves to diagonal matrices is so that the parameter space is two dimensional letting us give a good visual representation of the parameter space.
Figures~\ref{fig:contour} and~\ref{fig:flow} show the plots of function value contours and negative gradient flow respectively as a function of $x$ and $y$.
%For this easier version of problem, we show the contour of objective function $f(\U) = \frob{\U^2-\M}^2$ in Figure \ref{fig:contour}. The orange region indicates larger objective value compared to the blue region. The gradient flow of objective function is also shown in Figure \ref{fig:flow}, where the arrows are pointed toward negative gradient direction.

\begin{figure}[t]
\centering
\begin{minipage}{.45\textwidth}
  \centering
\includegraphics[width=\columnwidth]{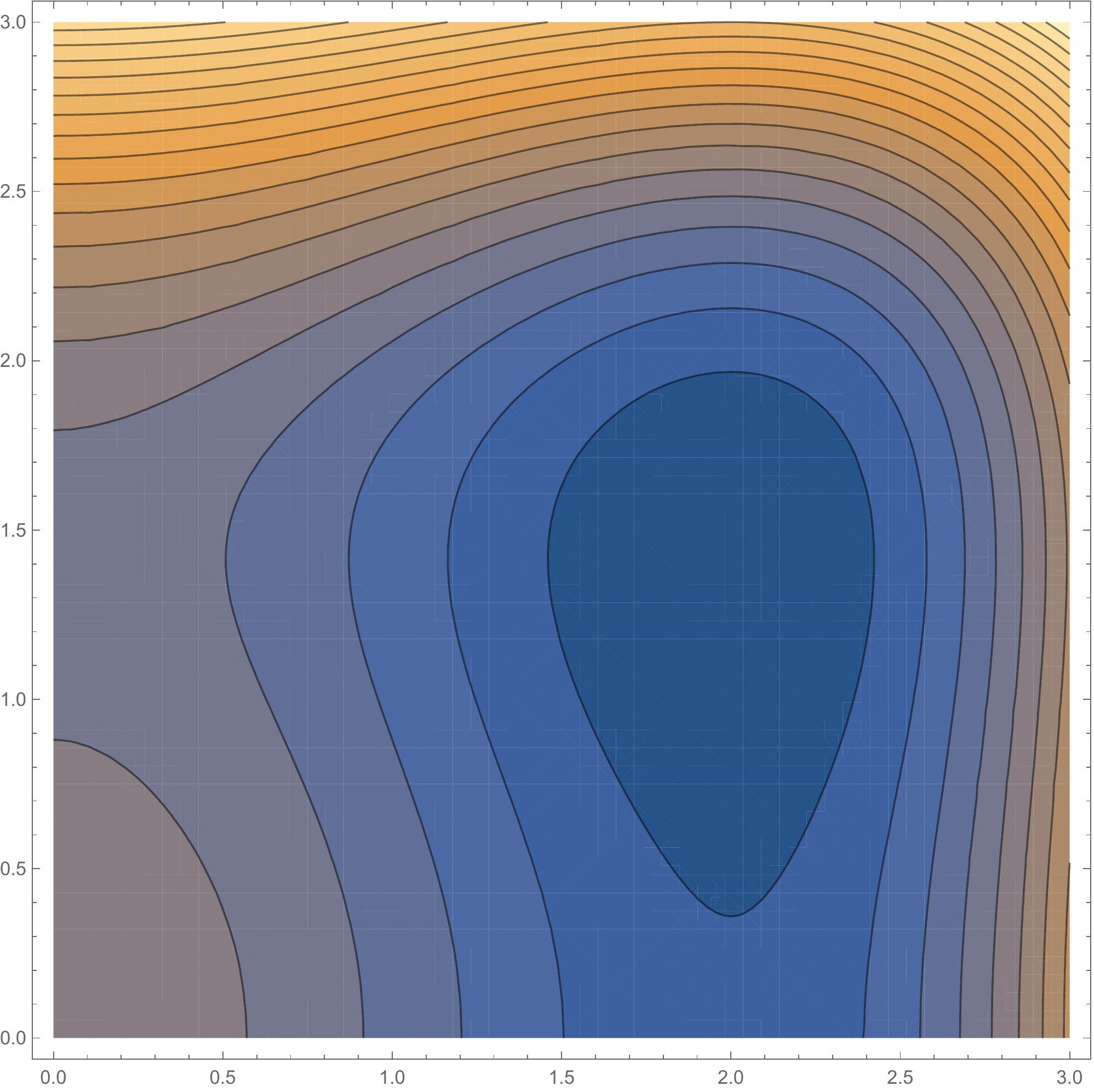}
\caption{Contour of Objective Functions}
\label{fig:contour}
\end{minipage}%
\begin{minipage}{.05\textwidth}
~
\end{minipage}
\begin{minipage}{.45\textwidth}
  \centering
\includegraphics[width=\columnwidth]{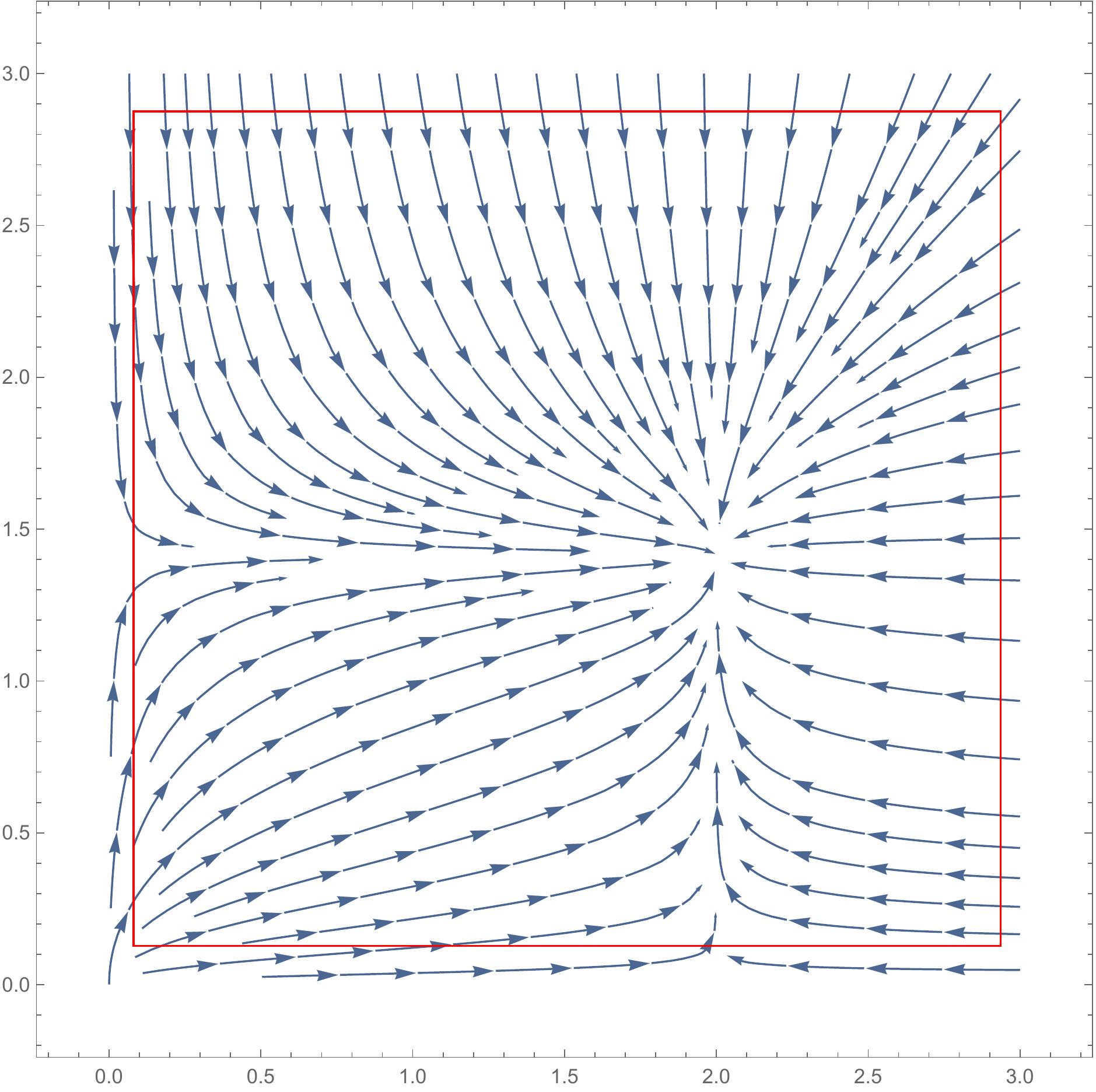}
\caption{Flow of Negative Gradient}
\label{fig:flow}
\end{minipage}
\end{figure}

% \begin{figure}[t]
% \vskip 0.2in
% \begin{center}
% \centerline{\includegraphics[width=\columnwidth]{fig/contour.pdf}}
% \caption{Contour of Objective Functions}
% \label{fig:contour}
% \end{center}
% \vskip -0.2in
% \end{figure} 

% \begin{figure}[t]
% \vskip 0.2in
% \begin{center}
% \centerline{\includegraphics[width=\columnwidth]{fig/flow2.pdf}}
% \caption{Flow of Negative Gradient}
% \label{fig:flow}
% \end{center}
% \vskip -0.2in
% \end{figure} 

We will use Figures~\ref{fig:contour} and~\ref{fig:flow} to qualitatively establish the three steps in our framework.

\begin{enumerate}
\item From Figure~\ref{fig:contour}, we note that $(2, \sqrt{2})$ is the global minimum. $(2, 0), (0, \sqrt{2})$ are saddle points, while $(0, 0)$ is local maximum. We notice all the stationary points which are not global minima lie on the surface $\sigma_{\min}(\U) =0$, that is, the union of x-axis and y-axis. 

\item By defining a boundary $\{\U | \sigma_{\min}(\U)>c, \twonorm{\U}<C\}$ for some small $c$ and large $C$ (corresponding to the red box in Figure \ref{fig:flow}), we see that negative gradient flow is pointed inside the box which means that for any point in the box, performing gradient descent with a small enough stepsize will ensure that all iterates lie inside the box (and hence keep away from saddle points).

\item Inside the red box, Figure~\ref{fig:flow} shows that negative gradient flow points to the global optimum. Moreover, we can indeed establish upper and lower bounds on the magnitude of gradients within the red box -- this corresponds to establishing smoothness and gradient dominance respectively.
\end{enumerate}

Together, all the above observations along with standard results in optimization tell us that gradient descent has geometric convergence for this problem.

We now present a formal proof of our result.
\subsection{Location of Saddle Points}
We first give a characterization of locations of all the stationary points which are not global minima.

\begin{lemma}\label{lem:saddle-location}
Within symmetric PSD cone $\{\U |\U \succeq 0\}$, all stationary points of $f(\U) = \frob{\M - \U^2}^2$ which are not global minima, must satisfy $\singmin{\U} = 0$.
\end{lemma}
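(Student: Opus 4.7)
The plan is to show that any stationary point $\U \succeq 0$ with $\singmin{\U} > 0$ must satisfy $\U^2 = \M$, so it is the unique PSD square root of $\M$ and hence a global minimum with $f(\U) = 0$. The contrapositive of this statement is exactly the lemma.

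First I would write out the Fr\'echet derivative in the symmetric PSD cone: $\grad f(\U) = 2\bigl[(\U^2 - \M)\U + \U(\U^2 - \M)\bigr]$. Introducing the residual $\mat{E} \eqdef \U^2 - \M$, which is symmetric because $\U$ and $\M$ are, stationarity $\grad f(\U) = 0$ is equivalent to the anticommutation relation
\begin{align*}
\U \mat{E} + \mat{E}\U = 0.
\end{align*}

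Next, since $\U$ is symmetric and strictly positive definite (this is the only place where the hypothesis $\singmin{\U} > 0$ is used), I would diagonalize $\U = \mat{Q}\Lambda\trans{\mat{Q}}$ with $\Lambda = \mathrm{diag}(\lambda_1,\ldots,\lambda_n)$ and every $\lambda_i > 0$. Conjugating the anticommutation relation by $\trans{\mat{Q}}$ on the left and $\mat{Q}$ on the right gives $\Lambda \widetilde{\mat{E}} + \widetilde{\mat{E}} \Lambda = 0$ for $\widetilde{\mat{E}} \eqdef \trans{\mat{Q}}\mat{E}\mat{Q}$, or componentwise $(\lambda_i + \lambda_j)\widetilde{\mat{E}}_{ij} = 0$ for every $i,j$. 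Positivity of the $\lambda_i$ makes every $\lambda_i + \lambda_j$ strictly positive, so $\widetilde{\mat{E}} = 0$, hence $\mat{E} = 0$, i.e.\ $\U^2 = \M$ and $f(\U) = 0$.

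The only subtle point is the noncommutativity of $\U$ and $\M$; I sidestep it by working entirely in the eigenbasis of $\U$ rather than trying to diagonalize $\U$ and $\M$ simultaneously. The strict positivity of the spectrum of $\U$ is precisely the ingredient that forces every entry of $\widetilde{\mat{E}}$ to vanish in a single line, and no heavier machinery is needed.
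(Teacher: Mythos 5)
Your proof is correct, but it takes a genuinely different route from the paper. You read stationarity as the anticommutation (Sylvester-type) equation $\U\mat{E} + \mat{E}\U = 0$ with $\mat{E} = \U^2 - \M$, pass to the eigenbasis of $\U$, and conclude $(\lambda_i+\lambda_j)\widetilde{\mat{E}}_{ij}=0$ forces $\mat{E}=0$ outright. The paper instead never solves the equation: it expands $\|\nabla f(\U)\|_F^2 = 2\,\mathrm{Tr}\bigl([(\U^2-\M)\U]^2\bigr) + 2\,\mathrm{Tr}\bigl((\U^2-\M)^2\U^2\bigr)$ and lower-bounds this by $4\singmin{\U}^2\frob{\U^2-\M}^2$, so that vanishing gradient plus $\singmin{\U}>0$ forces $f(\U)=0$. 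The two arguments use the positivity of the spectrum of $\U$ in the same essential way, but the paper's version is quantitative: the inequality $\|\nabla f(\U)\|_F^2 \ge 4\singmin{\U}^2 f(\U)$ is exactly the gradient-dominance bound of Lemma~\ref{lem:gradientdominate}, which the convergence-rate analysis reuses, so the saddle-location lemma comes for free from a computation the paper needs anyway. Your version is arguably cleaner as a standalone classification of stationary points (it directly exhibits $\U^2=\M$), but it yields no rate information. One cosmetic note: you do not need uniqueness of the PSD square root -- $\mat{E}=0$ already gives $f(\U)=0$, which is the global minimum since $f\ge 0$.
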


\begin{proof}
For any stationary point $\U'$ of $f(\U)$ which is not on the boundary $\{\U|\sigma_{\min}(\U) = 0\}$, by linear algebra calculation, we have:
\begin{align*}
0 = &\|\nabla f(\U')\|^2_F  = \|(\U'^2 - \M)\U' + \U'(\U'^2 - \M)\|^2_F \\
=& \langle (\U'^2 - \M)\U' + \U'(\U'^2 - \M), \\
&\quad\quad\quad\quad\quad\quad\quad (\U'^2 - \M)\U' + \U'(\U'^2 - \M) \rangle \\
=& 2\text{Tr}([(\U'^2 - \M)\U']^2) + 2\text{Tr}((\U'^2 - \M)^2\U'^2) \\ 
\ge & 4\sigma^2_{\min}(\U') \|\U'^2 - \M\|_F^2
\end{align*}
Therefore, since $\U'$ is not on the boundary of PSD cone, we have $\sigma^2_{\min}(\U) > 0$, which gives
 $f(\U') = \frob{\M - \U'^2}^2 \neq 0$, thus $\U'$ is global minima.
\end{proof}

As mentioned before, note that all the bad stationary points are contained in $\{\U | \singmin{\U} = 0\}$ which is a (Lebesgue) measure zero set.

\subsection{Stay Away from Saddle Surface}
Since the gradient at stationary points is zero, gradient descent can never converge to a global minimum if starting from suboptimal stationary points. 
Fortunately, in our case, gradient updates will keep away from bad stationary points. As in next Lemma, we show that as long as we choose suitable small learning rate, $\singmin{\U_t}$ will never be too small. 

\begin{lemma}\label{lem:smalleigval}
Suppose $\eta < c\frac{\min \left(\singmin{\Ut[0]},\sigma^{1/2}_{\min}(\M)/10\right)}{\max\left(\twonorm{\Ut[0]}^3,\left(3\twonorm{\M}\right)^{3/2}\right)}$, where $c$ is a small enough constant. Then, for every $t \in [T-1]$, we have $\Ut$ in Algorithm \ref{algo:gradDesMS} be a PD matrix with
\begin{align*}
\lammin{\Ut} \geq \min\left(\singmin{\Ut[0]},\frac{\sqrt{\singmin{\M}}}{10}\right).
\end{align*}
\end{lemma}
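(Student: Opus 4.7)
The plan is to prove jointly by strong induction on $t$ both that $\Ut$ is PD and that $\lammin{\Ut}\geq\bar c:=\min(\singmin{\Ut[0]},\sqrt{\singmin{\M}}/10)$. The base case is the hypothesis on $\Ut[0]$, and the inductive step will also invoke the companion operator-norm bound $\twonorm{\Ut}\leq L:=\max(\twonorm{\Ut[0]},\sqrt{3\twonorm{\M}})$ from Lemma~\ref{lem:opnorm-bound-MS}.

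The guiding intuition is the continuous flow $\dot\U=\M\U+\U\M-2\U^3$. If $\mu=\lammin{\U}$ is simple with unit eigenvector $v$, standard eigenvalue perturbation theory yields
\[\dot\mu \;=\; v^T\dot\U v \;=\; 2\mu\,v^T\M v - 2\mu^3 \;\geq\; 2\mu\,(\singmin{\M}-\mu^2),\]
which is strictly positive whenever $\mu<\sqrt{\singmin{\M}}$. Thus in continuous time the set $\{\lammin{\U}\geq\bar c\}$ is forward-invariant with margin, and the factor $1/10$ inside $\bar c$ is there to absorb the $O(\eta^2)$ discretization error.

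For the discrete step it suffices, by Courant--Fischer, to verify $v^T\Utp v\geq\bar c$ for every unit $v$. Expanding the update and using the scalar identity $v^T\Ut\M v=v^T\M\Ut v$ gives
\[v^T\Utp v \;=\; a \;-\; 2\eta\bigl(v^T\Ut^3 v - v^T\M\Ut v\bigr), \qquad a:=v^T\Ut v \geq \lammin{\Ut}\geq\bar c.\]
I would then write $\M=\singmin{\M}\eye+\widetilde\M$ with $\widetilde\M\succeq 0$, which isolates a non-negative drift $2\eta\singmin{\M}\,a$ and leaves a cross-term $v^T\widetilde\M\Ut v$ to be controlled by Cauchy--Schwarz: $|v^T\widetilde\M\Ut v|\leq\twonorm{\M}\twonorm{\Ut v}$, $v^T\Ut^3 v\leq\twonorm{\Ut}^2 a$, and $\twonorm{\Ut v}^2\leq\twonorm{\Ut}\,a$ for PSD $\Ut$. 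A case split on $a$ then finishes: when $a\geq 2\bar c$, the slack $a-\bar c\geq\bar c$ absorbs the correction provided $\eta=O(\bar c/L^3)$; when $\bar c\leq a\leq 2\bar c$, $v$ must sit essentially in the lowest-eigenvalue subspace of $\Ut$, so $\twonorm{\Ut v}\leq\sqrt{2L\bar c}$ is small and the drift combined with the step-size and scale constraints dominates the errors.

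I expect the main obstacle to be the second regime. Because $\M$ and $\Ut$ do not commute, the anticommutator $\widetilde\M\Ut+\Ut\widetilde\M$ may fail to be PSD, so the cross-term $v^T\widetilde\M\Ut v$ genuinely needs a Cauchy--Schwarz bound rather than an eigenvalue bound, and this is the largest correction. The specific exponent $3$ in $\eta\propto\bar c/L^3$, the factor $1/10$ inside $\bar c$, and the safety factor $3$ inside $L=\sqrt{3\twonorm{\M}}$ are arranged precisely so that the $O(\eta L^{5/2}\sqrt{\bar c})$ cross-term and the $O(\eta L^2\bar c)$ cubic push-down are both strictly subdominant to $\bar c$. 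Once $\lammin{\Utp}\geq\bar c>0$ is secured, $\Utp\succ 0$ is automatic and the induction closes.
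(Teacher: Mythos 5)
Your setup is sound: induction on $t$, importing the operator-norm bound from Lemma~\ref{lem:opnorm-bound-MS}, reducing to quadratic forms $v^\top\Utp v = a - 2\eta\bigl(v^\top\Ut^3v - v^\top\M\Ut v\bigr)$, and correctly identifying the non-PSD anticommutator $\M\Ut+\Ut\M$ as the crux. The gap is in your second regime, $\bar c\le a\le 2\bar c$. There the Cauchy--Schwarz loss on the cross term is $2\eta\twonorm{\M}\twonorm{\Ut v}\le 2\eta\twonorm{\M}\sqrt{La}$, which is \emph{first order in} $\eta$ -- the same order as the drift $2\eta\singmin{\M}a$ -- and exceeds it by a factor of roughly $\cnM\sqrt{L/\bar c}\gg 1$. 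Shrinking $\eta$ cannot fix this because both terms scale linearly in $\eta$, and the slack $a-\bar c$ vanishes exactly at the boundary case $a=\bar c$ where the invariant must be preserved. Your fallback, that the loss is ``subdominant to $\bar c$,'' is not the right criterion: the inductive invariant is $\lammin{\Ut}\ge\bar c$, so a strictly positive per-step decrease at the boundary, however small, breaks the induction (the losses would accumulate over $T$ steps). Note also that Cauchy--Schwarz is essentially tight here: taking $v$ at angle $\theta\approx\sqrt{\bar c/L}$ from the bottom eigenvector of $\Ut$ toward the top one, with $\M$ having an off-diagonal entry of order $\twonorm{\M}$ in that basis, gives $v^\top\M\Ut v\approx -\twonorm{\M}\sqrt{\bar cL}$ while $a\approx 2\bar c$, so the bound cannot be improved without using more structure.

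The structure you are missing is that the cross term is only badly negative when $v$ has overlap $\theta$ with high eigendirections of $\Ut$, in which case the slack $a-\bar c\approx\theta^2L$ is \emph{quadratic} in $\theta$ while the loss is linear in $\theta$; the worst case is then $\theta=O(\eta)$, where the loss is $O(\eta^2)$ and genuinely dominated by the $O(\eta)$ drift. The paper captures this automatically by completing the square: it splits
\begin{equation*}
\lammin{\Utp}\;\ge\;\lammin{\tfrac34\Ut-2\eta\Ut^3}\;+\;\lammin{\tfrac14\Ut+\eta\left(\M\Ut+\Ut\M\right)},
\end{equation*}
and rewrites the second block as $\left(\tfrac12\eye+\eta\M\right)\Ut\left(\tfrac12\eye+\eta\M\right)-\eta^2\M\Ut\M$ (up to constants). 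The sign-indefinite anticommutator is thereby absorbed into a PSD congruence of $\Ut$, whose smallest eigenvalue is at least $\lammin{\Ut}\bigl(\tfrac12+\eta\singmin{\M}\bigr)^2$, at the price of a remainder $\eta^2\M\Ut\M\preceq\eta^2\twonorm{\M}^2\twonorm{\Ut}\,\eye$ that is second order in $\eta$ and hence dominated by the drift $\eta\singmin{\M}\lammin{\Ut}$ once $\eta$ is small enough. Your first regime ($a\ge 2\bar c$) and your treatment of the cubic term are fine; replacing the Cauchy--Schwarz step in the second regime with this congruence identity (or an equivalent split on the angle $\theta$ rather than on $a$) is what is needed to close the argument.
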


It turns out that the gradient updates will not only keep $\singmin{\U}$ from being too small, but also keep $\twonorm{\U}$ from being too large.

% Next, we show when initial point $\U_0$ is away from the surface with bad stationary points, all iterates $\U_t$ will also be away from this surface. 

\begin{lemma}\label{lem:opnorm-bound-MS}
%Suppose $\twonorm{\Ut[0]} \leq \sqrt{3 \twonorm{\M}}$ and $\eta < \frac{1}{10\twonorm{\M}}$.
Suppose $\eta < \frac{1}{10 \max\left(\twonorm{\Ut[0]}^2,3{\twonorm{\M}}\right)}$. For every $t\in[T-1]$, we have $\Ut$ in Algorithm \ref{algo:gradDesMS} satisfying:
\begin{align*}
\twonorm{\Ut} \leq \max\left(\twonorm{\Ut[0]}, \sqrt{3 \twonorm{\M}}\right).
\end{align*}
\end{lemma}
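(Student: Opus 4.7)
The plan is to induct on $t$. The base case is trivial because $R := \max(\twonorm{\Ut[0]}, \sqrt{3\twonorm{\M}})$ majorizes $\twonorm{\Ut[0]}$. For the inductive step, assume $\twonorm{\Ut} \leq R$ and show $\twonorm{\Utp} \leq R$. First I would observe that $\Ut$ remains symmetric throughout the iteration: the increment $-\eta[(\Ut^2-\M)\Ut + \Ut(\Ut^2-\M)]$ is the anticommutator of two symmetric matrices, hence symmetric. This lets me use the variational/eigenvalue characterization of the operator norm freely.

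The key algebraic step is to regroup the update as
\begin{equation*}
\Utp \;=\; \bigl(\Ut - 2\eta \Ut^3\bigr) \;+\; \eta(\M\Ut + \Ut\M),
\end{equation*}
and then apply the triangle inequality together with $\twonorm{\M\Ut+\Ut\M} \leq 2\twonorm{\M}\twonorm{\Ut}$:
\begin{equation*}
\twonorm{\Utp} \;\leq\; \twonorm{\Ut - 2\eta \Ut^3} \;+\; 2\eta \twonorm{\M}\,\twonorm{\Ut}.
\end{equation*}
For the first term I would diagonalize $\Ut = V D V^\top$ and study the scalar function $g(x)=x-2\eta x^3$ on $[0,R]$. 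Because the hypothesis gives $\eta R^2 \leq 1/10$, we have $2\eta\lambda_i^2 \leq 1/5$ (so the factor is nonnegative) and $g'(x) = 1-6\eta x^2 \geq 2/5 > 0$ on $[0,R]$. Hence $g$ is increasing, and $\twonorm{\Ut - 2\eta \Ut^3} = \max_i |\lambda_i|(1-2\eta\lambda_i^2) = \twonorm{\Ut} - 2\eta\twonorm{\Ut}^3$. Substituting back yields the clean recurrence
\begin{equation*}
\twonorm{\Utp} \;\leq\; \twonorm{\Ut} - 2\eta \twonorm{\Ut}\bigl(\twonorm{\Ut}^2 - \twonorm{\M}\bigr).
\end{equation*}

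The conclusion is a dichotomy. If $\twonorm{\Ut}^2 \geq \twonorm{\M}$, the cubic term is self-damping and the bound gives $\twonorm{\Utp} \leq \twonorm{\Ut} \leq R$. Otherwise $\twonorm{\Ut} < \sqrt{\twonorm{\M}}$, and the recurrence yields $\twonorm{\Utp} \leq \sqrt{\twonorm{\M}}(1 + 2\eta\twonorm{\M})$; the stepsize bound $\eta \leq 1/(30\twonorm{\M})$ then forces $1+2\eta\twonorm{\M} \leq 16/15 < \sqrt{3}$, so $\twonorm{\Utp} \leq \sqrt{3\twonorm{\M}} \leq R$.

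I do not expect any significant obstacle; the only delicate choice is the regrouping above — pairing $-2\eta\Ut^3$ with $\Ut$ rather than with the $\M$-terms is what lets the scalar function $g$ absorb the cubic "pull" and produces the sign $\twonorm{\Ut}^2 - \twonorm{\M}$ on the right-hand side. A naive triangle inequality $\twonorm{\Utp} \leq \twonorm{\Ut} + 2\eta\twonorm{\Ut^2-\M}\twonorm{\Ut}$ loses precisely this cancellation and only gives a multiplicative blow-up, so the argument hinges on keeping $\Ut$ and $\Ut^3$ together.
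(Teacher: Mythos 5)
Your proof is correct and follows essentially the same route as the paper: the same regrouping $\Utp = \left(\eye - 2\eta\Ut^2\right)\Ut + \eta(\M\Ut + \Ut\M)$, the same triangle inequality, and the same scalar analysis of $\sigma \mapsto (1-2\eta\sigma^2)\sigma$ over the singular values of $\Ut$, all wrapped in the same induction. The only cosmetic difference is that you invoke monotonicity of $g(x)=x-2\eta x^3$ on $[0,R]$ to collapse the first term to $g(\twonorm{\Ut})$ and then split on $\twonorm{\Ut}^2$ versus $\twonorm{\M}$, whereas the paper splits on whether each singular value exceeds $\sqrt{2\twonorm{\M}}$; both give the same conclusion.
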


Although $\twonorm{\U}$ is not directly related to the surface with bad stationary points, the upper bound on $\twonorm{\U}$ is crucial for the smoothness of function $f(\cdot)$, which gives good convergence rate in Section \ref{sec:sketch-converge}.

\subsection{Convergence in Saddle-Free Region} \label{sec:sketch-converge}
So far, we have been able to establish both upper bounds and lower bounds on singular values of all iterates $\U_t$ given suitable small learning rate. Next, we show that when spectral norm of $\U$ is small, function $f(\U)$ is smooth, and when $\singmin{\U}$ is large, function $f(\U)$ is gradient dominated. 
\begin{lemma} \label{lem:smoothness}
Function $f(\U) = \frob{\M - \U^2}^2$ is $8 \max\{\Gamma, \twonorm{\M}\}$-smooth in region $\{\mat{U} | \twonorm{\mat{U}}^2 \le \Gamma\}$. That is, for any $\Ut[1], \Ut[2] \in \{\mat{U} | \twonorm{\mat{U}}^2 \le \Gamma\}$, we have:
\begin{equation*}
\|\nabla f(\Ut[1]) - \nabla f(\Ut[2])\|_F \le 8 \max\{\Gamma, \twonorm{\M}\} \|\Ut[1] - \Ut[2]\|_F
\end{equation*}
\end{lemma}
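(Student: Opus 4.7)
\medskip
\noindent\textbf{Proof proposal.} The plan is to compute the gradient difference $\nabla f(\Ut[1])-\nabla f(\Ut[2])$ explicitly, reduce it to a difference of cubes plus a linear term in $\M$, and then bound each piece using the assumed operator-norm cap $\twonorm{\Ut[1]},\twonorm{\Ut[2]}\le\sqrt{\Gamma}$. Since $\nabla f(\U)=(\U^{2}-\M)\U+\U(\U^{2}-\M)=2\U^{3}-\M\U-\U\M$, writing $\A\eqdef \Ut[1]-\Ut[2]$ one obtains the clean identity
\begin{equation*}
\nabla f(\Ut[1])-\nabla f(\Ut[2]) \;=\; 2\bigl(\Ut[1]^{3}-\Ut[2]^{3}\bigr)\;-\;\M\A\;-\;\A\M.
\end{equation*}

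The second step is the telescoping decomposition of the cube difference,
\begin{equation*}
\Ut[1]^{3}-\Ut[2]^{3} \;=\; \Ut[1]^{2}\A\;+\;\Ut[1]\A\Ut[2]\;+\;\A\Ut[2]^{2},
\end{equation*}
which is the only nontrivial algebraic manipulation in the proof. I would then apply the sub-multiplicativity of Frobenius norm under operator-norm factors, namely $\frob{\B\C}\le\twonorm{\B}\frob{\C}$ and $\frob{\C\B}\le\twonorm{\B}\frob{\C}$, to each of the three cubic pieces. Each piece is bounded by $\max\{\twonorm{\Ut[1]},\twonorm{\Ut[2]}\}^{2}\frob{\A}\le \Gamma\,\frob{\A}$, so $\frob{\Ut[1]^{3}-\Ut[2]^{3}}\le 3\Gamma\frob{\A}$, while the two terms involving $\M$ together contribute at most $2\twonorm{\M}\frob{\A}$.

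Combining these bounds gives
\begin{equation*}
\frob{\nabla f(\Ut[1])-\nabla f(\Ut[2])}\;\le\;\bigl(6\Gamma+2\twonorm{\M}\bigr)\frob{\A}\;\le\;8\max\{\Gamma,\twonorm{\M}\}\frob{\A},
\end{equation*}
which is exactly the claim. There is no real obstacle here: the whole argument is elementary linear algebra once the telescoping identity is in place, and the constant $8$ comes out cleanly as $6+2$. The only point deserving a sentence of justification is the Frobenius/operator-norm inequality used to handle each summand, since that is what lets the operator-norm cap $\Gamma$ translate into a Frobenius-norm Lipschitz constant for $\nabla f$.
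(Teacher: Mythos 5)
Your proposal is correct and follows essentially the same route as the paper: the identity $\nabla f(\U)=2\U^{3}-\M\U-\U\M$, the telescoping decomposition $\Ut[1]^{3}-\Ut[2]^{3}=\Ut[1]^{2}\A+\Ut[1]\A\Ut[2]+\A\Ut[2]^{2}$, and the bound $\frob{\B\C}\le\twonorm{\B}\frob{\C}$ applied termwise to get $6\Gamma+2\twonorm{\M}\le 8\max\{\Gamma,\twonorm{\M}\}$. No discrepancies to report.
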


\begin{lemma} \label{lem:gradientdominate}
Function $f(\U) = \frob{\M - \U^2}^2$ is $4\gamma$-gradient dominated in region $\{\mat{U} | \singmin{\mat{U}}^2 \ge \gamma\}$. That is, for any $\U \in \{\mat{U} | \singmin{\mat{U}}^2 \ge \gamma\}$, we have:
\begin{equation*}
\|\nabla f(\U)\|^2_F \ge 4\gamma f(\U)
\end{equation*}
\end{lemma}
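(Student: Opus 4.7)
}

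The plan is to directly expand $\|\nabla f(\U)\|_F^2$ and extract a factor of $\singmin{\U}^2$ from both resulting trace terms, mirroring the computation already used in the proof of Lemma \ref{lem:saddle-location}. Set $\mat{E} \eqdef \U^2 - \M$, which is symmetric since $\U \succeq 0$ and $\M$ is symmetric. The gradient is
\begin{equation*}
\nabla f(\U) = \mat{E}\U + \U\mat{E}.
\end{equation*}

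The first step is to expand the squared Frobenius norm. Using symmetry of $\mat{E}$ and $\U$ together with cyclic invariance of the trace, one obtains
\begin{equation*}
\|\nabla f(\U)\|_F^2 \;=\; \trace{(\mat{E}\U + \U\mat{E})^2} \;=\; 2\,\trace{(\mat{E}\U)^2} + 2\,\trace{\mat{E}^2 \U^2}.
\end{equation*}
This is exactly the decomposition appearing in the proof of Lemma \ref{lem:saddle-location}, and it is the only step that uses the structure of the gradient.

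The second step is to lower bound each of the two trace terms by $\singmin{\U}^2 \frob{\mat{E}}^2$. For the second term, observe that $\trace{\mat{E}^2 \U^2} = \trace{(\mat{E}\U)(\mat{E}\U)^{\top}} = \frob{\mat{E}\U}^2$ (since $(\mat{E}\U)^{\top} = \U\mat{E}$), and then $\frob{\mat{E}\U}^2 \geq \singmin{\U}^2 \frob{\mat{E}}^2$ from the standard inequality $\frob{\A\B}^2 \geq \singmin{\B}^2 \frob{\A}^2$. For the first term, use that $\U \succeq 0$ admits the factorization $\U = \U^{1/2}\U^{1/2}$, which lets us rewrite
\begin{equation*}
\trace{(\mat{E}\U)^2} \;=\; \trace{\U^{1/2}\mat{E}\U\mat{E}\U^{1/2}} \;=\; \frob{\U^{1/2}\mat{E}\U^{1/2}}^2,
\end{equation*}
where the last equality uses that $\U^{1/2}\mat{E}\U^{1/2}$ is symmetric. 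Applying the Frobenius-norm singular-value inequality twice in succession yields
\begin{equation*}
\frob{\U^{1/2}\mat{E}\U^{1/2}}^2 \;\geq\; \singmin{\U^{1/2}}^2 \frob{\mat{E}\U^{1/2}}^2 \;\geq\; \singmin{\U^{1/2}}^4 \frob{\mat{E}}^2 \;=\; \singmin{\U}^2 \frob{\mat{E}}^2.
\end{equation*}

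Combining the two bounds and using the hypothesis $\singmin{\U}^2 \geq \gamma$ together with $f(\U) = \frob{\mat{E}}^2$ gives
\begin{equation*}
\|\nabla f(\U)\|_F^2 \;\geq\; 4\,\singmin{\U}^2 \frob{\mat{E}}^2 \;\geq\; 4\gamma f(\U),
\end{equation*}
which is the desired conclusion.

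The only non-routine step is the treatment of $\trace{(\mat{E}\U)^2}$, which is not obviously non-negative because $\mat{E}\U$ need not be symmetric when $\mat{E}$ is indefinite. The key trick is the symmetric factorization $\U = \U^{1/2}\U^{1/2}$, which converts this trace into a Frobenius-norm squared of the symmetric matrix $\U^{1/2}\mat{E}\U^{1/2}$; this is where PSD-ness of $\U$ is genuinely used, and everything else is a mechanical application of the $\frob{\A\B} \geq \singmin{\cdot}\frob{\cdot}$ inequality.
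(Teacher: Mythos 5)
Your proof is correct and follows essentially the same route as the paper's: expand $\|\nabla f(\U)\|_F^2$ into $2\,\trace{(\mat{E}\U)^2} + 2\,\trace{\mat{E}^2\U^2}$ and bound each trace term below by $\singmin{\U}^2\frob{\mat{E}}^2$. In fact you supply the one detail the paper leaves implicit, namely the $\U = \U^{1/2}\U^{1/2}$ factorization needed to handle $\trace{(\mat{E}\U)^2}$, which is indeed the right way to justify that step.
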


Lemma \ref{lem:smoothness} and \ref{lem:gradientdominate} are the formal versions of Eq.\eqref{eqn:hessF-cond} and Eq.\eqref{eqn:gradF-cond} in Section \ref{sec:approach}, which are essential in establishing geometric convergence. 

Putting all pieces together, we are now ready prove our main theorem:

\begin{proof}[Proof of Theorem \ref{thm:geomconv}]
Recall the definitions in Theorem \ref{thm:geomconv}:
\begin{align*}
    \compfactor &\eqdef \left(\frac{\max\left(\twonorm{\Ut[0]},\sqrt{\twonorm{\M}}\right)}{\min \left(\singmin{\Ut[0]},\sqrt{\singmin{\M}}\right)}\right)^3, \\
    \compfactorbeta &\eqdef \min \left(\singmin{\Ut[0]},\sqrt{\singmin{\M}}\right)
\end{align*}

By choosing learning rate $\eta < \frac{c}{\compfactor \compfactorbeta^2 }$ with small enough constant $c$. We can satisfy the precondition of Lemma \ref{lem:smalleigval}, and Lemma \ref{lem:opnorm-bound-MS} at same time. Therefore, we know all iterates will fall in region:
\begin{align*}
&\left\{\U \left| \twonorm{\U} \leq \max\left(\twonorm{\Ut[0]}, \sqrt{3 \twonorm{\M}}\right)\right.\right.,\\
&\quad\quad\left.\lammin{\U} \geq \min\left(\singmin{\Ut[0]},\frac{\sqrt{\singmin{\M}}}{10}\right) \right\}
\end{align*}

Then, apply Lemma \ref{lem:smoothness} and Lemma \ref{lem:gradientdominate}, we know in this region, function $f(\U) = \frob{\U^2 - \M}^2$ has smoothness parameter:
\begin{align*}
8 \max\left\{\max\left\{\twonorm{\Ut[0]}^2, 3 \twonorm{\M}\right\}, \twonorm{\M}\right\} \le 24\compfactor^{2/3}\compfactorbeta^2
\end{align*}
and gradient dominance parameter:
\begin{align*}
4\min\left\{\sigma_{\min}^2(\Ut[0]),\frac{\singmin{\M}}{100}\right\} \ge \frac{\beta^2}{25}
\end{align*}
That is, $f(\U)$ in the region is both $24\compfactor^{2/3}\compfactorbeta^2$-smooth, and $\beta^2/25$-gradient dominated. 

Finally, by Taylor's expansion of smooth function, we have:
\begin{align*}
f(\Ut[t+1]) \le& f(\Ut[t]) + \langle \nabla f(\Ut[t]), \Ut[t+1] - \Ut[t] \rangle \\
&\quad\quad\quad\quad\quad+ 12\compfactor^{2/3}\compfactorbeta^2 \frob{\Ut[t+1] - \Ut[t]}^2 \\
=& f(\Ut[t]) - (\eta - 12\eta^2\compfactor^{2/3}\compfactorbeta^2)\frob{\nabla f(\Ut[t])}^2\\
\le & f(\Ut[t]) - \frac{\eta}{2}\frob{\nabla f(\Ut[t])}^2\\
\le &(1-\eta\frac{\compfactorbeta^2}{50})f(\Ut[t])
\end{align*}
The second last inequality is again by setting constant $c$ in learning rate to be small enough, and the last inequality is by the property of gradient dominated. This finishes the proof.

\end{proof}
%!TEX root = noncvx_gradDes.tex

\section{Conclusion}\label{sec:conc}
In this paper, we take a first step towards addressing the large gap between local convergence results with good convergence rates and global convergence results with highly suboptimal convergence rates. We consider the problem of computing the squareroot of a PD matrix, which is a widely studied problem in numerical linear algebra, and show that non-convex gradient descent achieves global geometric convergence with a good rate. In addition, our analysis also establishes the stability of this method to numerical errors. We note that this is the first method to have provable robustness to numerical errors for this problem and our result illustrates that global convergence results are also useful in practice since they might shed light on the stability of optimization methods.%Even for this specific problem, gradient descent is very simple, uses only matrix multiplications, and is perhaps the most natural one, especially in noisy settings. Our result provides the first {\em global convergence} for this algorithm for {\em any} starting point that is not a saddle point. Our analysis naturally extends to provide robustness guarantees in the presence of error in the updates. 
%We believe our arguments provide a framework to extend such results to settings with statistical noise, i.e., when at each step we observe a (highly) noisy version of $M$ (or the gradient). Moreover, our analysis takes us a step closer to analyzing stochastic gradient descent for matrix square-root problem in particular, and matrix decomposition probems in general. We leave this as an interesting topic for future work.  

Our result shows that even in the presence of a large saddle point surface, gradient descent might be able to avoid it and converge to the global optimum at a linear rate. We believe that our framework and proof techniques should be applicable for several other nonconvex problems (especially those based on matrix factorization) in machine learning and numerical linear algebra and would lead to the analysis of gradient descent and stochastic gradient descent in a transparent way while also addressing key issues like robustness to noise or numerical errors.% Finally, this also motivates understanding large scale approaches such as stochastic gradient descent, which might be more appealing in some settings.
%%% Local Variables: 
%%% mode: latex
%%% TeX-master: "noncvx_gradDes"
%%% End: 

\bibliographystyle{plainnat} 
\bibliography{bib}

\newpage
\appendix
%!TEX root = noncvx_gradDes.tex
\section{Proofs of Lemmas in Section \ref{sec:proofsketch}}\label{sec:matfact-result}
In this section, we restate the Lemmas in Section \ref{sec:proofsketch} which were used to prove Theorem \ref{thm:geomconv}, and present their proofs.

% First, we show the location of all stationary points which are not global minima.
% \begin{lemma}[Restatement of Lemma \ref{lem:saddle-location}]\label{lem:saddle-location-re}
% All stationary points of $f(\U) = \frob{\M - \U^2}^2$ which are not global minima, satisfy $\singmin{\U} = 0$.
% \end{lemma}

% \begin{proof}
% For any stationary point $\U'$, we have:
% \begin{align*}
% 0 = &\|\nabla f(\U')\|^2_F  = \|(\U'^2 - \M)\U' + \U'(\U'^2 - \M)\|^2_F \\
% =& \langle (\U'^2 - \M)\U' + \U'(\U'^2 - \M), (\U'^2 - \M)\U' + \U'(\U'^2 - \M) \rangle \\
% \ge & 4\sigma^2_{\min}(\U') \|\U'^2 - \M\|_F^2
% \end{align*}
% Therefore, if $\U'$ is not global minima, we will have $f(\U') = \frob{\M - \U'^2}^2 \neq 0$, which implies $\sigma_{\min}(\U') = 0$
% \end{proof}

First, we prove lemmas which show a lower bound and a upper bound on the eigenvalues of the intermediate matrices $\Ut$ in Algorithm \ref{algo:gradDesMS}. This shows $\Ut$ always stay away from the surface where unwanted stationary point locate.
\begin{lemma}[Restatement of Lemma \ref{lem:smalleigval}]\label{lem:smalleigval-re}
%Suppose $\Ut[0]$ is a PSD matrix with $\twonorm{\Ut[0]} \leq \sqrt{3 \twonorm{\M}}$, $\singmin{\Ut[0]} \geq \frac{1}{10}\sqrt{\singmin{\M}}$ and $\eta < \frac{c}{\sqrt{\cnM} \twonorm{\M}}$, where $c$ is a small enough constant. Then, for every $t \in [T-1]$, we have $\Ut$ be a PSD matrix with
%\begin{align*}
%\singmin{\Ut} \geq \frac{\sqrt{\singmin{\M}}}{10}.
%\end{align*}
Suppose $\eta < \frac{c\min \left(\singmin{\Ut[0]},\sqrt{\singmin{\M}}/10\right)}{\max\left(\twonorm{\Ut[0]}^3,\left(3\twonorm{\M}\right)^{3/2}\right)}$, where $c$ is a small enough constant. Then, for every $t \in [T-1]$, we have $\Ut$ be a PD matrix with
\begin{align*}
\lammin{\Ut} \geq \min\left(\singmin{\Ut[0]},\frac{\sqrt{\singmin{\M}}}{10}\right).
\end{align*}
\end{lemma}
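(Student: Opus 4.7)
The plan is to prove the bound by induction on $t$. The base case $t=0$ is immediate: $\Ut[0]$ is PD by hypothesis, so $\lammin{\Ut[0]} = \singmin{\Ut[0]} \geq \beta_*$, where $\beta_* := \min(\singmin{\Ut[0]}, \sqrt{\singmin{\M}}/10)$. For the inductive step, I assume $\Ut$ is PD with $\lammin{\Ut} \geq \beta_*$, and invoke Lemma~\ref{lem:opnorm-bound-MS} (which applies under our step-size) to get $\twonorm{\Ut} \leq \Gamma := \max(\twonorm{\Ut[0]}, \sqrt{3\twonorm{\M}})$. The central algebraic tool is the identity, verified by direct expansion,
\[
\Utp \;=\; \bigl(\eye + \eta(\M - \Ut^2)\bigr)\,\Ut\,\bigl(\eye + \eta(\M - \Ut^2)\bigr) \;-\; \eta^2 (\M - \Ut^2)\,\Ut\,(\M - \Ut^2),
\]
which expresses $\Utp$ as a symmetric congruence of the PSD matrix $\Ut$ minus an $O(\eta^2 \Gamma^5)$ PSD correction; the correction is harmless under $\eta \leq c\beta_*/\Gamma^3$ with small $c$, since $4\eta^2\Gamma^5 \ll \beta_*$.

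The argument splits into two regimes on $\lammin{\Ut}$. In Case A, $\lammin{\Ut} \geq \sqrt{\singmin{\M}}/5 \geq 2\beta_*$, giving a comfortable buffer above $\beta_*$. A coarse Weyl estimate $\lammin{\Utp} \geq \lammin{\Ut} - \eta\twonorm{\M\Ut + \Ut\M - 2\Ut^3} \geq \lammin{\Ut} - O(\eta\Gamma^3)$ suffices, since the assumed step-size makes the subtracted term smaller than $\lammin{\Ut} - \beta_*$. In Case B, $\lammin{\Ut} < \sqrt{\singmin{\M}}/5$ and hence $\lammin{\Ut}^2 < \singmin{\M}/25$; here I target the stronger invariant $\lammin{\Utp} \geq \lammin{\Ut}$, so that no monotone decrease can drop us below $\beta_*$. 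For any unit $\v$, expand
\[
\vTr \Utp \v \;=\; \vTr \Ut \v + 2\eta\bigl(\vTr \M \Ut \v - \vTr \Ut^3 \v\bigr),
\]
and decompose $\v = \sum_j c_j \w_j$ in the eigenbasis of $\Ut$ with eigenvalues $\lambda_j \geq \beta_*$. The diagonal part of the restoring term satisfies $\sum_j c_j^2 \lambda_j M_{jj} \geq \singmin{\M}\,\vTr\Ut\v$ using $M_{jj} := \w_j^T \M \w_j \geq \singmin{\M}$. When $\v$ concentrates on the small-eigenvalue subspace of $\Ut$, the drag $\vTr\Ut^3\v \leq (\singmin{\M}/25)\,\vTr\Ut\v$ is dwarfed by the restoring diagonal; when $\v$ has significant weight in the large-eigenvalue subspace, $\vTr\Ut\v$ strictly exceeds $\lammin{\Ut}$ by an amount that must scale with the weight on large $\lambda_j$'s, providing the slack to absorb any net negative $O(\eta)$ contribution from the drag.

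The main obstacle is controlling the off-diagonal cross-terms $\sum_{j<k} c_j c_k(\lambda_j + \lambda_k) M_{jk}$ in $\vTr\M\Ut\v$ in the non-commuting regime, where $\M$ is not diagonal in the eigenbasis of $\Ut$ and these cross-terms can be negative. The plan is to bound them using $|M_{jk}| \leq \sqrt{M_{jj} M_{kk}}$ together with AM-GM on $|c_j c_k|$, and to exploit the fact that when $\vTr\Ut\v$ is close to $\lammin{\Ut}$, the Cauchy-Schwarz weights that reach the large-$\lambda_j$ sector are forced to be small, making the off-diagonal contribution lower-order than the diagonal restoring term under the step-size hypothesis with $c$ chosen sufficiently small. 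Combining the two cases with the $\eta^2$-correction bound closes the induction, and the PD conclusion follows because $\Utp$ is symmetric with $\lammin{\Utp} \geq \beta_* > 0$.
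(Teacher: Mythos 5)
Your overall scaffolding matches the paper's: induction, importing the spectral-norm bound from Lemma~\ref{lem:opnorm-bound-MS}, and a two-regime split on $\lammin{\Ut}$ (your Case~A is handled exactly as the paper handles its $\singmin{\Ut}\ge\sqrt{\singmin{\M}}/3$ regime, and your congruence identity for $\Utp$ is correct). The gap is in Case~B, which is the actual content of the lemma. There you expand test vectors in the eigenbasis of $\Ut$ and must control the off-diagonal terms $c_jc_k(\lambda_j+\lambda_k)M_{jk}$; you correctly identify this as ``the main obstacle'' but then only state a plan (Cauchy--Schwarz on $M_{jk}$, AM--GM on $|c_jc_k|$, slack from the large-$\lambda$ sector). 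That plan is not an argument: the buffer $\vTr\Ut\v-\lammin{\Ut}$ is \emph{quadratic} in the weight on the large-eigenvalue sector while the harmful cross term is \emph{linear} in it, so the buffer alone cannot absorb it; one must instead balance the cross term against the $O(\eta)$ diagonal restoring term $\eta\sum_j c_j^2\lambda_jM_{jj}$ with a carefully tuned AM--GM split, verify the balance uniformly over all $n$ directions and all distributions of the $\lambda_j$ (including the near-degenerate case where several $\lambda_j\approx\lammin{\Ut}$ and the buffer vanishes), and check that the required step size is still $\eta\lesssim\beta_*/\Gamma^3$ and not something smaller. None of this is carried out, and it is precisely the delicate part.

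A second, concrete error: your justification for discarding the $\eta^2(\M-\Ut^2)\Ut(\M-\Ut^2)$ correction compares it to the wrong quantity. It is not enough that $\eta^2\Gamma^5\ll\beta_*$; at the critical boundary $\lammin{\Ut}=\beta_*$ the correction must be dominated by the one-step restoring gain, which is $O(\eta\,\beta_*\singmin{\M})$. The crude bound $\eta^2\twonorm{\M-\Ut^2}^2\twonorm{\Ut}=O(\eta^2\Gamma^5)$ compared against $\eta\beta_*\singmin{\M}$ forces $\eta\lesssim\beta_*\singmin{\M}/\Gamma^5$, a factor of order $\cnM$ smaller than your assumed step size; rescuing this requires the finer estimate $\twonorm{\M\w}^2\le\twonorm{\M}\,\wTr\M\w$ applied direction-by-direction, which you do not invoke. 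The paper sidesteps both difficulties at once with a different decomposition: it splits off $\frac{3}{4}\Ut-2\eta\Ut^3$ to absorb the drag, writes the remainder as $\left(\frac{1}{2}\eye+\eta\M\right)\Ut\left(\frac{1}{2}\eye+\eta\M\right)-\eta^2\M\Ut\M$, and expands test vectors in the eigenbasis of $\M$ rather than of $\Ut$ --- the congruence factor is a function of $\M$, so no cross terms ever appear, and the $\eta^2\M\Ut\M$ correction is beaten by the restoring term per eigendirection of $\M$ (where both scale with $\lami{\M}$). I would either adopt that decomposition or fully execute the cross-term bookkeeping before considering the induction closed.
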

\begin{proof}
We will prove the lemma by induction. The base case $t=0$ holds trivially. Suppose the lemma holds for some $t$. We will now prove that it holds for $t+1$. We have
%Let $\lammin{\cdot}$ denote the minimum eigenvalue of matrix, then:
\begin{align}
\lammin{\Utp} =& \lammin{\Ut - \eta \left(\Ut^2 - \M\right) \Ut - \eta \Ut\left(\Ut^2 - \M\right)} \nn \\
\ge & \lammin{\frac{3}{4}\Ut - 2 \eta \Ut^3} + \lammin{\frac{1}{4}\Ut + \eta (\M\Ut + \Ut\M)} \nn \\
= & \lammin{\frac{3}{4}\Ut - 2 \eta \Ut^3} + \lammin{\left(\frac{1}{2}\eye + \eta \M\right)\Ut\left(\frac{1}{2}\eye + \eta \M\right) - \eta^2 \M\Ut\M} \label{eqn:lamminUtp-main}
\end{align}
When $\eta \le \frac{1}{100 \max\left(\twonorm{\Ut[0]}^2,3\twonorm{\M}\right)}$, using Lemma~\ref{lem:opnorm-bound-MS} we can bound the first term as
\begin{align}
\lammin{\frac{3}{4}\Ut - 2 \eta \Ut^3} = \frac{3}{4}\singmin{\Ut} - 2\eta \singmin{\Ut}^3.\label{eqn:lamminUtp-1}
\end{align}
To bound the second term, for any vector $\w \in \R^n$ with $\twonorm{\w} =1$, let $\w = \sum_{i}^n \alpha_i \v_i$, where $\v_i$ is the $i^{\textrm{th}}$ eigenvector of $\M$, and $\sum_{i=1}^n \alpha_i^2 = 1$. Then:
\begin{align}
&\wTr\left(\left(\frac{1}{2}\eye + \eta \M \right)\Ut \left(\frac{1}{2}\eye + \eta \M\right) - \eta^2 \M\Ut\M\right)\w \nn \\
\ge & \lammin{\Ut} \twonorm{\left(\frac{1}{2}\eye + \eta \M\right)\w}^2  - \eta^2\twonorm{\Ut} \twonorm{\M\w}^2 \nn \\
= & \lammin{\Ut} \sum_{i=1}^n \left(\frac{1}{2} +\eta \lami{\M} \right)^2\alpha_i^2
- \eta^2 \twonorm{\Ut} \sum_{i=1}^n \lami{\M}^2 \alpha_i^2 \nn \\
= & \lammin{\Ut} \sum_{i=1}^n \alpha_i^2\left(\frac{1}{4} +\eta \lami{\M} + \eta^2\lami{\M} ^2 - \eta^2 \cn{\Ut}\lami{\M}^2 \right) \nn \\
\stackrel{(\zeta_1)}{=} & \singmin{\Ut} \sum_{i=1}^n \alpha_i^2\left(\frac{1}{4} +\eta \singi{\M} + \eta^2\singi{\M} ^2 - \eta^2 \cn{\Ut}\singi{\M}^2 \right) \nn \\
\stackrel{(\zeta_2)}{\ge} & \singmin{\Ut} \sum_{i=1}^n \alpha_i^2\left(\frac{1}{4} +\frac{1}{2}\eta\singi{\M}\right)
\ge \singmin{\Ut}\left(\frac{1}{4} + \frac{1}{2}\eta \singmin{\M}\right), \label{eqn:lamminUtp-2}
\end{align}
where $(\zeta_1)$ is due to the fact that $\Ut$ is a PD matrix, so $\lammin{\Ut} = \singmin{\Ut} \ge 0$, and $(\zeta_2)$ is because since 
%\begin{align*}
$   \eta \le \frac{\min \left(\singmin{\Ut[0]},\sqrt{\singmin{\M}}/10\right)}{\max\left(\twonorm{\Ut[0]}^3,\left(3\twonorm{\M}\right)^{3/2}\right)}
%   \le \frac{1}{100\cn{\Ut} \twonorm{\M}}
    \le \frac{1}{2\cn{\Ut} \twonorm{\M}}$, 
%\end{align*}
we have $\eta^2 \cn{\Ut} \singi{\M}^2 \le \frac{1}{2} \eta \singi{\M}$.

\noindent Plugging Eq.\eqref{eqn:lamminUtp-1} and Eq.\eqref{eqn:lamminUtp-2} into Eq.\eqref{eqn:lamminUtp-main}, we have:
\begin{align*}
\lammin{\Utp} \ge \singmin{\Ut} ( 1 + \frac{1}{2}\eta \singmin{\M} - 2 \eta \singmin{\Ut}^2)
\end{align*}
When $\singmin{\Ut} \le \sqrt{\singmin{\M}}/3$, we obtain: $$\lammin{\Utp} \ge \singmin{\Ut} \ge \max\left(\singmin{\Ut[0]},\frac{\sqrt{\singmin{\M}}}{10}\right),$$ and when $\singmin{\Ut} \ge \sqrt{\singmin{\M}}/3$, we have: \begin{align*}\lammin{\Utp} &\ge \singmin{\Ut} ( 1 - 2 \eta \singmin{\Ut}^2)\\ &\ge \singmin{\Ut} ( 1 - 2 \eta \twonorm{\Ut}^2) \ge \frac{9}{10} \singmin{\Ut} \ge \min\left(\singmin{\Ut[0]},\frac{\sqrt{\singmin{\M}}}{10}\right).\end{align*} This concludes the proof.
\end{proof}

% The following result bounds the operator norm of the intermediate matrices $\Ut$ in Algorithm~\ref{algo:gradDesMS}.

\begin{lemma}[Restatement of Lemma \ref{lem:opnorm-bound-MS}]\label{lem:opnorm-bound-MS-re}
%Suppose $\twonorm{\Ut[0]} \leq \sqrt{3 \twonorm{\M}}$ and $\eta < \frac{1}{10\twonorm{\M}}$.
Suppose $\eta < \frac{1}{10 \max\left(\twonorm{\Ut[0]}^2,3{\twonorm{\M}}\right)}$. For every $t\in[T-1]$, we have:
\begin{align*}
\twonorm{\Ut} \leq \max\left(\twonorm{\Ut[0]}, \sqrt{3 \twonorm{\M}}\right).
\end{align*}
\end{lemma}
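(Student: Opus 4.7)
The plan is to induct on $t$, with $R := \max(\twonorm{\Ut[0]}, \sqrt{3\twonorm{\M}})$ as the target bound. The base case $t=0$ is immediate. For the inductive step, I would first rewrite the update using that $\Ut \succeq 0$ is symmetric, so $(\Ut^2-\M)\Ut + \Ut(\Ut^2-\M) = 2\Ut^3 - (\M\Ut + \Ut\M)$, giving
\begin{equation*}
\Utp = (\Ut - 2\eta\Ut^3) + \eta(\M\Ut + \Ut\M).
\end{equation*}
Then apply the triangle inequality together with the crude bound $\twonorm{\M\Ut + \Ut\M} \leq 2\twonorm{\M}\twonorm{\Ut}$ to get
\begin{equation*}
\twonorm{\Utp} \leq \twonorm{\Ut - 2\eta\Ut^3} + 2\eta\twonorm{\M}\twonorm{\Ut}.
\end{equation*}

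The key step is to evaluate the first norm exactly. Since $\Ut$ is PSD with eigenvalues in $[0,\twonorm{\Ut}] \subseteq [0,R]$, the matrix $\Ut - 2\eta\Ut^3$ has eigenvalues $g(\sigma_i(\Ut))$ where $g(x) = x - 2\eta x^3$. Under the hypothesis $\eta \leq 1/(10 R^2)$ we have $g'(x) = 1 - 6\eta x^2 > 0$ and $g(x) \geq 0$ on all of $[0,R]$, so $g$ is non-negative and strictly increasing there; consequently $\twonorm{\Ut - 2\eta\Ut^3} = \twonorm{\Ut}(1 - 2\eta\twonorm{\Ut}^2)$.

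Now I split into two cases depending on how $\twonorm{\Ut}$ compares to $\sqrt{\twonorm{\M}}$. If $\twonorm{\Ut} \leq \sqrt{\twonorm{\M}}$, the first term is bounded by $\sqrt{\twonorm{\M}}$ and the second by $2\eta\twonorm{\M}\sqrt{\twonorm{\M}}$, so $\twonorm{\Utp} \leq \sqrt{\twonorm{\M}}(1 + 2\eta\twonorm{\M})$; using $\eta \leq 1/(30\twonorm{\M})$ makes the parenthetical factor at most $16/15 < \sqrt{3}$, which yields $\twonorm{\Utp} \leq \sqrt{3\twonorm{\M}} \leq R$. If instead $\twonorm{\Ut} > \sqrt{\twonorm{\M}}$, combining the two terms gives
\begin{equation*}
\twonorm{\Utp} \leq \twonorm{\Ut}\bigl(1 - 2\eta(\twonorm{\Ut}^2 - \twonorm{\M})\bigr),
\end{equation*}
and since $\twonorm{\Ut}^2 > \twonorm{\M}$, the parenthetical factor is strictly less than $1$, so $\twonorm{\Utp} < \twonorm{\Ut} \leq R$. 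Either way the induction closes.

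The main point to watch is the non-commutativity of $\M$ and $\Ut$ in the cross term, which prevents any eigenvalue-by-eigenvalue reduction as in the warm-up lemma. The trick above is that the crude triangle bound $\twonorm{\M\Ut + \Ut\M} \leq 2\twonorm{\M}\twonorm{\Ut}$ is good enough for an upper bound on $\twonorm{\Ut}$ because the decisive $-2\eta\Ut^3$ term is invariant under commutativity failures (it's symmetric in $\Ut$ alone). The other routine check is just that the step-size hypothesis keeps $g$ monotone and non-negative on $[0,R]$, which is where the constant $10$ in $\eta < 1/(10\max(\twonorm{\Ut[0]}^2, 3\twonorm{\M}))$ is used.
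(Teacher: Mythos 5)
Your proof is correct and follows essentially the same route as the paper: the same decomposition $\Utp = (\eye - 2\eta\Ut^2)\Ut + \eta(\M\Ut + \Ut\M)$, the same triangle-inequality bound on the cross term, and the same key step of evaluating $\twonorm{(\eye-2\eta\Ut^2)\Ut}$ exactly via the scalar map $\sigma \mapsto (1-2\eta\sigma^2)\sigma$. The only cosmetic difference is where you place the case split (on $\twonorm{\Ut}$ versus $\sqrt{\twonorm{\M}}$, rather than the paper's per-singular-value split at $\sqrt{2\twonorm{\M}}$), and that the paper phrases the key step with singular values of the symmetric iterate, which avoids even needing $\Ut\succeq 0$.
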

\begin{proof}
We will prove the lemma by induction. The base case $t=0$ is trivially true. Supposing the statement is true for $\Ut$, we will prove it for $\Utp$.

Using the update equation of Algorithm~\ref{algo:gradDesMS}, we have:
\begin{align}
\twonorm{\Utp} &= \twonorm{\Ut - \eta \left(\Ut^2 - \M\right) \Ut - \eta \Ut \left(\Ut^2 - \M\right)} \nn\\
 &= \twonorm{\left(\eye - 2 \eta \Ut^2\right) \Ut + \eta \M \Ut + \eta \Ut \M} \nn\\
&\leq \twonorm{\left(\eye - 2 \eta \Ut^2\right) \Ut} + 2 \eta \twonorm{\M} \twonorm{\Ut}. \label{eqn:Utspectralbound}
\end{align}

%Since $\eta < \frac{1}{10\twonorm{\Ut[t]}^2}$, the matrix 
The singular values of the matrix $\left(\eye-2\eta \Ut^2\right)\Ut$ are exactly $(1-2\eta \sigma^2)\cdot{\sigma}$ where $\sigma$ is a singular value of $\Ut$. For $\sigma \leq \sqrt{2 \twonorm{\M}}$, we clearly have $(1-2\eta \sigma^2){\sigma} \leq \sqrt{2\twonorm{\M}}$. On the other hand, for $\sigma > \sqrt{2\twonorm{\M}}$, we have $(1-2\eta\sigma^2){\sigma} < (1-4\eta \twonorm{\M}) {\sigma}$. Plugging this observation into Eq.\eqref{eqn:Utspectralbound}, we obtain:
\begin{align*}
\twonorm{\Utp} &\leq \max\left(\sqrt{2\twonorm{\M}},(1-4\eta \twonorm{\M}) \twonorm{\Ut}\right) + 2 \eta \twonorm{\M} \twonorm{\Ut} \\ &\leq \max\left(\sqrt{2\twonorm{\M}}+\frac{1}{15}\twonorm{\Ut[t]},\twonorm{\Ut}\right)
\leq \max\left(\twonorm{\Ut[0]}, \sqrt{3 \twonorm{\M}}\right),
\end{align*}
where we used the inductive hypothesis in the last step. This proves the lemma.
\end{proof}

Finally, we prove the smoothness and gradient dominance in above regions.
\begin{lemma}[Restatement of Lemma \ref{lem:smoothness}] \label{lem:smoothness-re}
For any $\Ut[1], \Ut[2] \in \{\mat{U} | \twonorm{\mat{U}}^2 \le \Gamma\}$, we have function $f(\U) = \frob{\M - \U^2}^2$ satisfying:
\begin{equation}
\|\nabla f(\Ut[1]) - \nabla f(\Ut[2])\|_F \le 8 \max\{\Gamma, \twonorm{\M}\} \|\Ut[1] - \Ut[2]\|_F
\end{equation}
\end{lemma}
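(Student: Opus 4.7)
The plan is to write the gradient in closed form, take a difference, and bound each piece using a standard telescoping identity for cubes of non-commuting matrices together with Frobenius/operator-norm submultiplicativity.

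First, from the update in Algorithm \ref{algo:gradDesMS} we read off
\[
\nabla f(\U) = (\U^2 - \M)\U + \U(\U^2 - \M) = 2\U^3 - \M\U - \U\M.
\]
Setting $\Delta = \Ut[1] - \Ut[2]$, I would write
\[
\nabla f(\Ut[1]) - \nabla f(\Ut[2]) = 2\bigl(\Ut[1]^3 - \Ut[2]^3\bigr) - \bigl(\M\Delta + \Delta\M\bigr),
\]
and handle the two summands separately.

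For the cubic part, I would use the telescoping identity
\[
\Ut[1]^3 - \Ut[2]^3 = \Ut[1]^2\,\Delta + \Ut[1]\,\Delta\,\Ut[2] + \Delta\,\Ut[2]^2,
\]
which is valid even without commutativity. Applying the triangle inequality together with the submultiplicative bound $\|AB\|_F \le \|A\|_2 \|B\|_F$ (and its right-multiplication analogue) and then the hypothesis $\twonorm{\Ut[i]}^2 \le \Gamma$, I get
\[
\frob{\Ut[1]^3 - \Ut[2]^3} \le \bigl(\twonorm{\Ut[1]}^2 + \twonorm{\Ut[1]}\twonorm{\Ut[2]} + \twonorm{\Ut[2]}^2\bigr)\frob{\Delta} \le 3\Gamma\,\frob{\Delta}.
\]
For the linear part, the same argument gives $\frob{\M\Delta + \Delta\M} \le 2\twonorm{\M}\,\frob{\Delta}$.

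Combining the two estimates yields
\[
\frob{\nabla f(\Ut[1]) - \nabla f(\Ut[2])} \le \bigl(6\Gamma + 2\twonorm{\M}\bigr)\frob{\Delta} \le 8\max\{\Gamma, \twonorm{\M}\}\,\frob{\Delta},
\]
which is the claimed bound. There is no real obstacle here: the only non-routine ingredient is remembering to use the non-commutative telescoping of $\Ut[1]^3 - \Ut[2]^3$ so that every factor in each of the three resulting terms except $\Delta$ can be peeled off by $\|\cdot\|_2$ and bounded by $\sqrt{\Gamma}$, yielding the prefactor $3\Gamma$.
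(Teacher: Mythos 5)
Your proposal is correct and matches the paper's own proof essentially line for line: the same closed form of the gradient, the same non-commutative telescoping of $\Ut[1]^3 - \Ut[2]^3$, and the same $\|\cdot\|_2$/$\|\cdot\|_F$ submultiplicativity bounds leading to $6\Gamma + 2\twonorm{\M} \le 8\max\{\Gamma,\twonorm{\M}\}$. No issues.
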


\begin{proof}
By expanding gradient $\nabla f(\U)$, and reordering terms, we have:
\begin{align*}
 & \|\nabla f(\Ut[1]) - \nabla f(\Ut[2])\|_F \\
 = &\|(2\Ut[1]^3 - \M\Ut[1] - \Ut[1]\M) - (2\Ut[2]^3 - \M\Ut[2] - \Ut[2]\M) \|_F \\
 = &\|2 (\Ut[1]^3 - \Ut[2]^3) - \M (\Ut[1] - \Ut[2]) -(\Ut[1] - \Ut[2])\M\|_F \\
 \le & 2\|\M\|_2\|\Ut[1] - \Ut[2]\|_F + 2\|\Ut[1]^3 - \Ut[2]^3\|_F \\
 = &2\|\M\|_2\|\Ut[1] - \Ut[2]\|_F + 2\|\Ut[1]^2(\Ut[1] - \Ut[2]) + \Ut[1](\Ut[1] - \Ut[2])\Ut[2] + (\Ut[1] - \Ut[2]) \Ut[2]^2\|_F \\
 \le & 2\|\M\|_2\|\Ut[1] - \Ut[2]\|_F + 6\Gamma \|\Ut[1] - \Ut[2]\|_F \\
 \le & 8 \max\{\Gamma, \twonorm{\M}\}\|\Ut[1] - \Ut[2]\|_F
\end{align*}
\end{proof}

\begin{lemma}[Restatement of Lemma \ref{lem:gradientdominate}] \label{lem:gradientdominate-re}
For any $\U \in \{\mat{U} | \singmin{\mat{U}}^2 \ge \gamma\}$, we have function $f(\U) = \frob{\M - \U^2}^2$ satisfying:
\begin{equation}
\|\nabla f(\U)\|^2_F \ge 4\gamma f(\U)
\end{equation}
\end{lemma}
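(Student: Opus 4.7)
Write $\E = \U^2 - \M$, which is symmetric since both $\U^2$ and $\M$ are symmetric. The gradient of $f$ is $\nabla f(\U) = 2(\E\U + \U\E)$, so what needs to be shown is
\begin{equation*}
\|\E\U + \U\E\|_F^2 \;\geq\; \gamma\,\|\E\|_F^2,
\end{equation*}
which, together with $\|\nabla f(\U)\|_F^2 = 4\|\E\U+\U\E\|_F^2$, will deliver the bound (in fact with a slightly better constant than stated).

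The first step is to expand using cyclicity of the trace:
\begin{equation*}
\|\E\U + \U\E\|_F^2 = \trace{(\E\U+\U\E)^2} = 2\,\trace{\U^2 \E^2} + 2\,\trace{\U\E\U\E}.
\end{equation*}
The plan is then to bound each piece separately.

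For the first piece, observe that $\trace{\U^2\E^2} = \trace{\U\E^2\U} = \|\E\U\|_F^2$. Since $\E^2 \succeq 0$ and $\U^2 \succeq \singmin{\U}^2\eye \succeq \gamma \eye$, the standard inequality $\trace{\A\B} \geq \lambda_{\min}(\A)\trace{\B}$ for $\A \succeq 0$, $\B \succeq 0$ (proved e.g.\ by writing $\trace{\A\B} = \trace{\B^{1/2}\A\B^{1/2}}$) yields
\begin{equation*}
\trace{\U^2\E^2} \;\geq\; \gamma\,\trace{\E^2} \;=\; \gamma\,\|\E\|_F^2.
\end{equation*}

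For the second piece, the key trick is to symmetrize: let $\S = \U^{1/2}$, which exists and is symmetric since $\U$ is PD. Then by cyclicity,
\begin{equation*}
\trace{\U\E\U\E} = \trace{\S\S\E\S\S\E} = \trace{(\S\E\S)(\S\E\S)} = \|\S\E\S\|_F^2 \;\geq\; 0,
\end{equation*}
because $\S\E\S$ is symmetric. Combining the two pieces gives $\|\E\U+\U\E\|_F^2 \geq 2\gamma\|\E\|_F^2$, hence $\|\nabla f(\U)\|_F^2 \geq 8\gamma f(\U) \geq 4\gamma f(\U)$.

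The only mildly subtle step is the second piece: one might worry that $\trace{(\U\E)^2}$ could be negative, as traces of squares of non-symmetric real matrices generally can be. The symmetrization $\U\E\U\E \to (\U^{1/2}\E\U^{1/2})^2$ via cyclic permutation is what rescues us; everything else is a direct application of the $\singmin{\U}^2 \geq \gamma$ hypothesis together with a PSD trace inequality.
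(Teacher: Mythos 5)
Your proof is correct and is essentially the paper's argument made explicit: both expand $\frob{\mat{E}\U+\U\mat{E}}^2 = 2\trace{\U^2\mat{E}^2} + 2\trace{(\U\mat{E})^2}$ with $\mat{E}=\U^2-\M$ and bound the two traces, and your symmetrization $\trace{(\U\mat{E})^2}=\frob{\U^{1/2}\mat{E}\U^{1/2}}^2\ge 0$ is exactly the justification that the paper's one-line display glosses over. One caveat on constants: throughout the paper the gradient is taken to be $(\U^2-\M)\U+\U(\U^2-\M)$ \emph{without} your factor of $2$ (this is the quantity appearing in the update rule of Algorithm~\ref{algo:gradDesMS} and in the proofs of Lemmas~\ref{lem:saddle-location} and~\ref{lem:smoothness}), and under that convention your argument --- which discards the cross term as merely nonnegative --- yields only $2\gamma f(\U)$ rather than the stated $4\gamma f(\U)$. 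The fix is one more line of your own symmetrization: $\frob{\U^{1/2}\mat{E}\U^{1/2}}^2 \geq \singmin{\U^{1/2}}^4 \frob{\mat{E}}^2 \geq \gamma \frob{\mat{E}}^2$, so \emph{both} trace terms contribute $\gamma\frob{\mat{E}}^2$ and the constant $4\gamma$ is recovered without leaning on the extra factor of $4$ from the doubled gradient. Note also that both your proof and the paper's implicitly use that $\U$ is symmetric PSD (so that $\U^{1/2}$ exists and $\U^2\succeq\gamma\eye$), which is consistent with the feasible set $\U\succeq 0$.
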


\begin{proof}
By expanding gradient $\nabla f(\U)$, we have:
\begin{align*}
&\|\nabla f(\U)\|^2_F  = \|(\U^2 - \M)\U + \U(\U^2 - \M)\|^2_F \\
=& \langle (\U^2 - \M)\U + \U(\U^2 - \M), (\U^2 - \M)\U + \U(\U^2 - \M) \rangle \\
\ge & 4\sigma^2_{\min}(\U) \|\U^2 - \M\|_F^2 = 4\gamma f(\U)
\end{align*}

\end{proof}

%!TEX root = noncvx_gradDes.tex
\section{Proof of Theorem~\ref{thm:stability}}
In this section, we will prove Theorem~\ref{thm:stability}. We first state a useful lemma which is a stronger version of Lemma \ref{lem:smalleigval}
\begin{lemma}\label{lem:smalleigval-strong}
Suppose $\Ut$ is a PD matrix with $\twonorm{\Ut} \leq \max(\twonorm{\Ut[0]},\sqrt{3\twonorm{\M}})$, and $\singmin{\Ut} \geq \min(\singmin{\Ut[0]},\frac{1}{10}\sqrt{\singmin{\M}})$. Suppose further that $\eta < \frac{c \min(\singmin{\Ut[0]},\frac{1}{10}\sqrt{\singmin{\M}})}{ \max(\twonorm{\Ut[0]},\sqrt{3\twonorm{\M}})^{3/2} }$, where $c$ is a small enough constant and denote $\Utp \eqdef \Ut - \eta \left(\Ut^2 - \M\right) \Ut - \eta \Ut\left(\Ut^2 - \M\right)$. Then, $\Utp$ is a PD matrix with:
\begin{align*}
\lammin{\Utp} \ge \left(1+ \frac{\eta {\singmin{\M}}}{15}\right) \min(\singmin{\Ut[0]}, \frac{1}{10} \sqrt{\singmin{\M}}).
\end{align*}
\end{lemma}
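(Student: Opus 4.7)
The plan is to essentially replay the computation in the proof of Lemma \ref{lem:smalleigval-re}, but instead of only extracting the bound $\lammin{\Utp}\ge\singmin{\Ut}$, to extract a slightly stronger multiplicative factor that yields geometric growth. Specifically, I will first reuse the decomposition
\[
\Utp \;=\; \bigl(\tfrac{3}{4}\Ut - 2\eta\Ut^3\bigr) + \bigl(\tfrac{1}{4}\Ut + \eta(\M\Ut + \Ut\M)\bigr)
\]
together with the Weyl inequality $\lammin{A+B}\ge\lammin{A}+\lammin{B}$ and the quadratic-form bound used there, yielding
\[
\lammin{\Utp} \;\ge\; \singmin{\Ut}\Bigl(1 + \tfrac{1}{2}\eta\singmin{\M} - 2\eta\,\singmin{\Ut}^2\Bigr).
\]
The step-size hypothesis and the upper bound $\twonorm{\Ut}\le\max(\twonorm{\Ut[0]},\sqrt{3\twonorm{\M}})$ coming from Lemma \ref{lem:opnorm-bound-MS-re} will justify dropping the higher-order cross terms, exactly as in Lemma \ref{lem:smalleigval-re}.

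Write $m \eqdef \min(\singmin{\Ut[0]},\tfrac{1}{10}\sqrt{\singmin{\M}})$. I will split on the size of $\singmin{\Ut}$. In the \emph{small-singular-value regime} $\singmin{\Ut}\le\tfrac{1}{3}\sqrt{\singmin{\M}}$, the quadratic term satisfies $2\eta\singmin{\Ut}^2 \le \tfrac{2}{9}\eta\singmin{\M}$, so the parenthesized factor is at least $1 + (\tfrac{1}{2}-\tfrac{2}{9})\eta\singmin{\M} = 1+\tfrac{5}{18}\eta\singmin{\M}$. Combined with $\singmin{\Ut}\ge m$ (our hypothesis), this already beats $(1+\tfrac{1}{15}\eta\singmin{\M})m$.

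In the \emph{large-singular-value regime} $\singmin{\Ut}>\tfrac{1}{3}\sqrt{\singmin{\M}}$, the quadratic term may dominate the linear one, so I cannot extract multiplicative growth directly from the inequality above. Instead I will use the cruder bound $\lammin{\Utp}\ge\singmin{\Ut}(1-2\eta\twonorm{\Ut}^2)$ together with $\twonorm{\Ut}\le\max(\twonorm{\Ut[0]},\sqrt{3\twonorm{\M}})$, which under the step-size hypothesis gives $(1-2\eta\twonorm{\Ut}^2)\ge \tfrac{9}{10}$ (say). Since in this regime $\singmin{\Ut}>\tfrac{1}{3}\sqrt{\singmin{\M}}\ge \tfrac{10}{3}\,m$, multiplying through yields $\lammin{\Utp}\ge 3m$, which trivially exceeds $(1+\tfrac{1}{15}\eta\singmin{\M})m$ since $\eta\singmin{\M}$ is $O(1)$-small from the step-size constraint.

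The main obstacle is purely bookkeeping: checking that the constant $c$ in the step-size condition is small enough to simultaneously (i) validate the step where Lemma \ref{lem:smalleigval-re}'s proof drops the $O(\eta^2)$ terms, (ii) ensure the contraction factor $1-2\eta\twonorm{\Ut}^2$ stays close to $1$, and (iii) keep $\eta\singmin{\M}$ bounded enough for $3m \ge (1+\tfrac{1}{15}\eta\singmin{\M})m$ in the large-$\sigma$ case. All three reduce to requiring $c$ to be a sufficiently small universal constant, so they can be checked simultaneously at the end.
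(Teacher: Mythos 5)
Your proposal is correct and is essentially the paper's own argument: the paper proves Lemma~\ref{lem:smalleigval-strong} by simply observing that the proof of Lemma~\ref{lem:smalleigval-re} already yields the multiplicative factor, via exactly the two-case split on $\singmin{\Ut}$ versus $\tfrac{1}{3}\sqrt{\singmin{\M}}$ that you describe (geometric gain from the $\tfrac{1}{2}\eta\singmin{\M}$ term in the small regime, and slack from $\singmin{\Ut}\ge\tfrac{10}{3}m$ in the large regime). The only cosmetic difference is that the operator-norm bound on $\Ut$ is a hypothesis of this one-step lemma rather than something you need to import from Lemma~\ref{lem:opnorm-bound-MS-re}.
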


Indeed, our proof of Lemma \ref{lem:smalleigval} already proves this stronger result. 
Now we are ready to prove Theorem \ref{thm:stability}.

\begin{proof}[Proof of Theorem~\ref{thm:stability}]
The proof of the theorem is a fairly straight forward modification of the proof of Theorem~\ref{thm:geomconv}. We will be terse since for most part we will use the arguments employed in the proofs of Theorem~\ref{thm:geomconv} and Lemmas~\ref{lem:opnorm-bound-MS} and~\ref{lem:smalleigval}.

We have the following two claims, which are robust versions of Lemmas~\ref{lem:opnorm-bound-MS} and~\ref{lem:smalleigval}, bounding the spectral norm and smallest eigenvalue of intermediate iterates. The proofs will be provided after the proof of the theorem.
\begin{claim}\label{claim:opnorm-bound-MS}
For every $t\in[T-1]$, we have:
\begin{align*}
\twonorm{\Ut} \leq \max(\twonorm{\Ut[0]},\sqrt{3 \twonorm{\M}}).
\end{align*}
\end{claim}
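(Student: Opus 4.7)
The plan is to prove the claim by induction on $t$, closely paralleling the proof of Lemma~\ref{lem:opnorm-bound-MS} but tracking the per-iteration error $\Errt$ through the bounds via the triangle inequality. The base case $t = 0$ is immediate from the hypothesis on $\Ut[0]$. For the inductive step, decompose the noisy update as $\Utp = \widetilde{\Utp} + \Errt$, where $\widetilde{\Utp} \eqdef \Ut - \eta(\Ut^2 - \M)\Ut - \eta\Ut(\Ut^2 - \M)$ denotes one noise-free gradient step from the current iterate, and observe that
$$\twonorm{\Utp} \leq \twonorm{\widetilde{\Utp}} + \twonorm{\Errt}.$$
It therefore suffices to show that the noise-free map leaves enough slack below $M_0 \eqdef \max(\twonorm{\Ut[0]}, \sqrt{3 \twonorm{\M}})$ to absorb $\twonorm{\Errt}$.

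I would import the spectral calculation from the proof of Lemma~\ref{lem:opnorm-bound-MS} verbatim: under the inductive hypothesis $\twonorm{\Ut} \leq M_0$ and the step-size assumption,
$$\twonorm{\widetilde{\Utp}} \leq \max\bigl(\sqrt{2\twonorm{\M}} + 2\eta\twonorm{\M}\twonorm{\Ut},\; (1 - 2\eta\twonorm{\M})\twonorm{\Ut}\bigr).$$
A short case split on $\twonorm{\Ut}$ then bounds the slack $M_0 - \twonorm{\widetilde{\Utp}}$. When $\twonorm{\Ut} \leq \sqrt{2\twonorm{\M}}$, the right-hand side is at most $(1 + 2\eta\twonorm{\M})\sqrt{2\twonorm{\M}}$, which lies below $\sqrt{3\twonorm{\M}} \leq M_0$ by a gap of order $\sqrt{\twonorm{\M}}$ for small $\eta$. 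When $\twonorm{\Ut} > \sqrt{2\twonorm{\M}}$, either argument of the max leaves a slack of at least $2\sqrt{2}\,\eta\twonorm{\M}^{3/2}$ (the first argument is at most $\sqrt{2\twonorm{\M}} + 2\eta\twonorm{\M}M_0$, whose gap to $M_0 \geq \sqrt{3\twonorm{\M}}$ is $\Om{\sqrt{\twonorm{\M}}}$; the second is at most $M_0 - 2\eta\twonorm{\M}\twonorm{\Ut}$, whose gap is at least $2\sqrt{2}\,\eta\twonorm{\M}^{3/2}$).

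The concluding step is to verify the error bound is compatible with both slacks. Using $\singmin{\M} \leq \twonorm{\M}$ and $\compfactorbeta \leq \sqrt{\singmin{\M}} \leq \sqrt{\twonorm{\M}}$, the hypothesis $\twonorm{\Errt} < \tfrac{1}{300}\eta\singmin{\M}\compfactorbeta$ yields $\twonorm{\Errt} < \tfrac{1}{300}\eta\twonorm{\M}^{3/2}$, which is strictly dominated by $2\sqrt{2}\,\eta\twonorm{\M}^{3/2}$ (and trivially by the $\Om{\sqrt{\twonorm{\M}}}$ gap in the small case). Thus $\twonorm{\Utp} \leq M_0$, closing the induction. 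The only delicate point is the large-$\twonorm{\Ut}$ case, where the noiseless iteration only contracts by an $\order{\eta}$ amount; the precise $\order{\eta\singmin{\M}\compfactorbeta}$ scaling imposed on $\twonorm{\Errt}$ in Theorem~\ref{thm:stability} is engineered precisely to match this contraction magnitude, so no further tightening of the noise-free argument is needed.
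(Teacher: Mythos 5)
Your proposal is correct and follows essentially the same route as the paper's proof: induction, a triangle-inequality split of the noisy update into the noise-free gradient step plus $\Errt$, and reuse of the spectral bound $\max\left(\sqrt{2\twonorm{\M}},(1-4\eta\twonorm{\M})\twonorm{\Ut}\right)+2\eta\twonorm{\M}\twonorm{\Ut}$ from Lemma~\ref{lem:opnorm-bound-MS}. In fact your explicit case analysis of the slack below $\max(\twonorm{\Ut[0]},\sqrt{3\twonorm{\M}})$ — in particular the observation that the contraction in the large-$\twonorm{\Ut}$ regime is only of order $\eta\twonorm{\M}^{3/2}$, which is exactly what the $\frac{1}{300}\eta\singmin{\M}\compfactorbeta$ bound on $\twonorm{\Errt}$ is calibrated to absorb — supplies the justification that the paper's final inequality leaves implicit.
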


\begin{claim}\label{claim:smalleigval}
For every $t \in [T-1]$, we have $\Ut$ be a PD matrix with
\begin{align*}
\singmin{\Ut} \geq \min(\singmin{\Ut[0]},\frac{\sqrt{\singmin{\M}}}{10}).
\end{align*}
\end{claim}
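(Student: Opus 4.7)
The plan is to run induction on $t$, parallel to the proof of Lemma~\ref{lem:smalleigval} in the noiseless setting, but leveraging the quantitative slack in the stronger version Lemma~\ref{lem:smalleigval-strong} to absorb the per-step error $\Errt$ via Weyl's inequality. Write $\mu \eqdef \min(\singmin{\Ut[0]},\tfrac{1}{10}\sqrt{\singmin{\M}})$; the goal is to show $\singmin{\Ut} \ge \mu$ for every $t$. The base case $t=0$ is immediate from the hypothesis that $\Ut[0]$ is PD and satisfies $\singmin{\Ut[0]} \ge \mu$.

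For the inductive step, suppose $\Ut$ is PD with $\singmin{\Ut} \ge \mu$. By the companion Claim~\ref{claim:opnorm-bound-MS} (proved in parallel by the same induction and invoked freely here), we also have $\twonorm{\Ut} \le \max(\twonorm{\Ut[0]},\sqrt{3\twonorm{\M}})$, so the hypotheses of Lemma~\ref{lem:smalleigval-strong} are met. Introduce the noiseless step $\mat{V} \eqdef \Ut - \eta (\Ut^2 - \M)\Ut - \eta \Ut(\Ut^2 - \M)$, so that the noisy iterate can be written $\Utp = \mat{V} + \Errt$. Lemma~\ref{lem:smalleigval-strong} then delivers $\lammin{\mat{V}} \ge \bigl(1 + \tfrac{\eta \singmin{\M}}{15}\bigr)\mu$, and Weyl's inequality for symmetric perturbations yields
\[
\lammin{\Utp} \;\ge\; \lammin{\mat{V}} - \twonorm{\Errt} \;\ge\; \bigl(1 + \tfrac{\eta \singmin{\M}}{15}\bigr)\mu - \twonorm{\Errt}.
\]

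It remains to translate the hypothesis $\twonorm{\Errt} < \tfrac{1}{300}\eta \singmin{\M}\,\compfactorbeta$ into a bound measured against $\mu$ rather than $\compfactorbeta$. I expect this bookkeeping to be the most delicate part: one must verify the elementary inequality $\compfactorbeta \le 10\mu$ by a short case split according to whether $\singmin{\Ut[0]}$ lies below $\tfrac{1}{10}\sqrt{\singmin{\M}}$, inside $[\tfrac{1}{10}\sqrt{\singmin{\M}},\sqrt{\singmin{\M}}]$, or above $\sqrt{\singmin{\M}}$. In each regime $\compfactorbeta = \min(\singmin{\Ut[0]},\sqrt{\singmin{\M}})$ and $\mu = \min(\singmin{\Ut[0]},\tfrac{1}{10}\sqrt{\singmin{\M}})$ either coincide or differ by a factor of at most $10$, so $\twonorm{\Errt} < \tfrac{1}{30}\eta \singmin{\M}\,\mu$. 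Substituting back gives $\lammin{\Utp} \ge \bigl(1 + \tfrac{\eta \singmin{\M}}{30}\bigr)\mu > \mu > 0$, which both closes the induction and certifies that $\Utp$ is strictly positive definite, as required.
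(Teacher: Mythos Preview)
Your argument is correct and is essentially the paper's own proof. The paper also runs induction on $t$, sets $\Utptild$ to be the noiseless update, invokes Lemma~\ref{lem:smalleigval-strong} to obtain $\singmin{\Utptild} \ge \bigl(1+\tfrac{\eta\singmin{\M}}{15}\bigr)\mu$, and then subtracts $\twonorm{\Errt}$ via Weyl to close the induction; your case split verifying $\compfactorbeta \le 10\mu$ merely makes explicit the numerical check the paper leaves to the reader.
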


We prove the theorem by induction. The base case $t=0$ holds trivially. Assuming the theorem is true for $t$, we will show it for $t+1$. Denoting $\Utptild \eqdef \Ut - \eta \left(\Ut^2 - \M\right) \Ut - \eta \Ut\left(\Ut^2 - \M\right)$, we have
\begin{align}
\frob{\M-\Utp^2} &= \frob{\M - \Utptild^2 - \Utptild \Errt - \Errt \Utptild - \Errt^2} \nn \\
&\leq \frob{\M - \Utptild^2} + 2 \twonorm{\Utptild} \frob{\Errt} + \frob{\Errt^2}. \label{eqn:errexp}
\end{align}
Using Claims~\ref{claim:opnorm-bound-MS} and~\ref{claim:smalleigval}, Lemma~\ref{lem:opnorm-bound-MS} tells us that
\begin{align*}
\twonorm{\Utptild} \leq \max(\twonorm{\Ut[0]},\sqrt{3 \twonorm{\M}}),
\end{align*}
and Theorem~\ref{thm:geomconv} tells us that
\begin{align*}
\frob{\M - \Utptild^2} \leq \exp\left(-\ch \eta \min(\singmin{\Ut[0]}^2,\singmin{\M})\right) \frob{\M - \Ut^2}.
\end{align*}
Plugging the above two conclusions into~\eqref{eqn:errexp}, tells us that
\begin{align*}
&\frob{\M-\Utp^2} \\
&\leq \exp\left(-\ch \eta \min(\singmin{\Ut[0]}^2,\singmin{\M})\right) \frob{\M - \Ut^2} + 2 \max(\twonorm{\Ut[0]},\sqrt{3 \twonorm{\M}}) \frob{\Errt} \\
&\qquad + \frac{1}{30} \eta \singmin{\M} \min(\singmin{\Ut[0]},\sqrt{\singmin{\M}}) \frob{\Errt} \\
&\leq \exp\left(-\ch \eta \min(\singmin{\Ut[0]}^2,\singmin{\M})\right) \frob{\M - \Ut[0]^2} \\
&\qquad + 4 \max(\twonorm{\Ut[0]},\sqrt{3 \twonorm{\M}}) \sum_{s=0}^{t} \exp\left(- \ch \eta \min(\singmin{\Ut[0]}^2,\singmin{\M}) (t-s) \right) \frob{\Errt[s]},
\end{align*}
where we used the induction hypothesis in the last step.
\end{proof}

We now prove Claim~\ref{claim:opnorm-bound-MS}.
\begin{proof}[Proof of Claim~\ref{claim:opnorm-bound-MS}]
Just as in the proof of Lemma~\ref{lem:opnorm-bound-MS}, we will use induction. Assuming the claim is true for $\Ut$, by update equation 
\begin{equation*}
\Utp = \Ut - \eta \left(\Ut^2 - \M\right) \Ut - \eta \Ut\left(\Ut^2 - \M\right) + \Errt
\end{equation*}
We can write out:
\begin{align}
\twonorm{\Utp}
%&= \twonorm{\Ut - \eta \left(\Ut^2 - \M\right) \Ut - \eta \Ut \left(\Ut^2 - \M\right) + \Errt} \nn\\
%&= \twonorm{\left(\eye - 2 \eta \Ut^2\right) \Ut + \eta \M \Ut + \eta \Ut \M + \Errt} \nn\\
%&\leq \twonorm{\left(\eye - 2 \eta \Ut^2\right) \Ut} + \eta \twonorm{\M \Ut} + \eta \twonorm{\Ut \M} + \twonorm{\Errt} \nn \\
&\leq \twonorm{\left(\eye - 2 \eta \Ut^2\right) \Ut} + \eta \twonorm{\M} \twonorm{\Ut} + \eta \twonorm{\Ut} \twonorm{\M} + \twonorm{\Errt}. \label{eqn:Utspectralbound-err}
\end{align}

Since $\eta < \frac{1}{10\max(\twonorm{\Ut[0]}^2,\twonorm{\M})}$ and $\twonorm{\Ut} \leq \max(\twonorm{\Ut[0]},\sqrt{3\twonorm{\M}})$, note that the singular values of the matrix $\left(\eye-2\eta \Ut^2\right)\Ut$ are exactly $(1-2\eta \sigma^2)\cdot{\sigma}$ where $\sigma$ is a singular value of $\Ut$. For $\sigma \leq \sqrt{2 \twonorm{\M}}$, we clearly have $(1-2\eta \sigma^2){\sigma} \leq \sqrt{2\twonorm{\M}}$. On the other hand, for $\sigma > \sqrt{2\twonorm{\M}}$, we have $(1-2\eta\sigma^2){\sigma} < (1-4\eta \twonorm{\M}) {\sigma}$. Plugging this observation into~\eqref{eqn:Utspectralbound-err}, we obtain:
\begin{align*}
\twonorm{\Utp} &\leq \max\left(\sqrt{2\twonorm{\M}},(1-4\eta \twonorm{\M}) \twonorm{\Ut}\right) + 2 \eta \twonorm{\M} \twonorm{\Ut} + \twonorm{\Errt} \\
&\leq \max(\twonorm{\Ut[0]},\sqrt{3 \twonorm{\M}}),
\end{align*}
proving the claim.
\end{proof}

We now prove Claim~\ref{claim:smalleigval}.
\begin{proof}[Proof of Claim~\ref{claim:smalleigval}]
We will use induction, with the proof following fairly easily using Lemma~\ref{lem:smalleigval-strong}. Suppose $\singmin{\Ut} \geq \min(\singmin{\Ut[0]}, \frac{1}{10} \sqrt{\singmin{\M}})$. Denoting
\begin{align*}
\Utptild \eqdef \Ut - \eta \left(\Ut^2 - \M\right) \Ut - \eta \Ut\left(\Ut^2 - \M\right),
\end{align*}
Lemma~\ref{lem:smalleigval-strong} tells us that
\begin{align*}
\singmin{\Utptild} \geq \left(1+ \frac{\eta {\singmin{\M}}}{15}\right) \min(\singmin{\Ut[0]}, \frac{1}{10} \sqrt{\singmin{\M}}),
\end{align*}
which then implies the claim, since
\begin{align*}
\singmin{\Utp} \geq \singmin{\Utp} - \twonorm{\Errt} \geq \min(\singmin{\Ut[0]}, \frac{1}{10} \sqrt{\singmin{\M}}).
\end{align*}
\end{proof}

%!TEX root = noncvx_gradDes.tex

\section{Proof of Theorem~\ref{thm:lb}}\label{sec:lb}
In this section, we will prove Theorem~\ref{thm:lb}.

\begin{proof}
Consider two-dimensional case, where 
\begin{equation*}
\M = \begin{pmatrix}
\twonorm{\M} & 0 \\
0 & \singmin{\M}
\end{pmatrix}
\end{equation*}

We will prove Theorem~\ref{thm:lb} by considering two cases of step size (where $\eta \ge \frac{1}{4\twonorm{\M}}$ or $\eta < \frac{1}{4\twonorm{\M}}$) separately. 
% The first statement tells us the stepsize $\eta$ can not be as large as $\Omega(\frac{1}{\twonorm{\M}})$, otherwise, the local search can stuck at saddle point. Then, the second statement shows $\Omega(\cnM)$ number of steps is unavoidable for local search algorithm to converge.

\paragraph{Case 1}: For step size $\eta \ge \frac{1}{4\twonorm{\M}}$.
Let $\beta = \frac{1}{2\eta\twonorm{\M}} + 1$, and
consider following initialization $\Ut[0]$:
\begin{equation*}
\Ut[0] = \begin{pmatrix}
\sqrt{\beta\twonorm{\M}} & 0 \\
0 & \sqrt{\singmin{\M}}
\end{pmatrix}
\end{equation*}
Since $\eta \ge \frac{1}{4\twonorm{\M}}$, we know $\beta \le 3$, which satisfies our assumption about $\Ut[0]$ and $\M$.
By calculation, we have:
\begin{equation*}
\Ut[0](\Ut[0]^2 - \M) + (\Ut[0]^2 - \M)\Ut[0] =\begin{pmatrix}
2(\beta - 1)\sqrt{\beta \twonorm{\M}^3} & 0 \\
0 & 0
\end{pmatrix}
\end{equation*}
and since $2\eta(\beta-1)\twonorm{\M} = 1$, we have:
\begin{equation*}
\Ut[1] = \Ut[0] - \eta[\Ut[0](\Ut[0]^2 - \M) + (\Ut[0]^2 - \M)\Ut[0]] = 
\begin{pmatrix}
\sqrt{\beta\twonorm{\M}} - 2\eta(\beta - 1)\sqrt{\beta \twonorm{\M}^3}  & 0 \\
0 & 1
\end{pmatrix} = 
\begin{pmatrix}
0  & 0 \\
0 & 1
\end{pmatrix} 
\end{equation*}
Then, by induction we can easily show for all $t\ge 1$, $\Ut = \Ut[1]$, thus $\frob{\Ut^2 - \M} \ge \twonorm{\M}\ge \frac{1}{4}\singmin{\M}$ .

\paragraph{Case 2}: For step size $\eta \le \frac{1}{4\twonorm{\M}}$, 
consider following initialization $\Ut[0]$:
\begin{equation*}
\Ut[0] = \begin{pmatrix}
\sqrt{\twonorm{\M}} & 0 \\
0 & \frac{1}{2}\sqrt{\singmin{\M}}
\end{pmatrix}
\end{equation*}
According to the update rule in Algorithm \ref{algo:gradDesMS}, we can easily show by induction that: for any $t \ge 0$, $\Ut$ is of form:
\begin{equation*}
\Ut = \begin{pmatrix}
\sqrt{\twonorm{\M}} & 0 \\
0 & \alpha_t\sqrt{\singmin{\M}}
\end{pmatrix}
\end{equation*}
where $\alpha_t$ is a factor that depends on $t$, satisfying $0\le \alpha_t \le 1$ and:
\begin{equation*}
\alpha_{t+1} = \alpha_t [1+ \eta \singmin{\M}(1-\alpha_t^2)], \quad \alpha_0 = \frac{1}{2}
\end{equation*}
Since $\eta \le \frac{1}{4\twonorm{\M}}$, we know:
\begin{equation*}
\alpha_{t+1} \le \alpha_t [1 + \frac{1}{4\cnM}(1-\alpha_t^2)]
\le \alpha_t + \frac{1}{4\cnM}
\end{equation*}
Therefore, for all 
$t \le \cnM$, we have $\alpha_t \le \frac{3}{4}$, and thus 
$\frob{\Ut[t]^2 - \M} \ge \frac{1}{4}\singmin{\M}$.
\end{proof}

\end{document}